\numberwithin{equation}{section}
\newcommand{\com}[1]{\textcolor{red}{\texttt{giulio:} #1}}
\newcommand{\igor}[1]{\textcolor{blue}{\texttt{igor:} #1}}
\newcommand{\V}{{V}} 
\newcommand{\E}{{E}} 
\newcommand{\G}{G} 
\newcommand{\T}{\mathcal{T}} 
\newcommand{\tree}{T} 
\newcommand{\spatree}{\tau} 
\newcommand{\subtree}{\zeta} 
\newcommand{\Ssub}{S} 
\newcommand{\PP}{\mathbb{P}} 
\newcommand{\twostage}{\rm ts} 
\newtheorem{theorem}{Theorem}
\newtheorem{proposition}[theorem]{Proposition}
\newtheorem{conjecture}[theorem]{Conjecture}
\newtheorem{lemma}[theorem]{Lemma}
\newtheorem{corollary}[theorem]{Corollary}
\title[A transient equivalence between Aldous-Broder and Wilson's algorithms]{A transient equivalence between Aldous-Broder and Wilson's algorithms and a two-stage framework for generating uniform spanning trees}
\author{Igor Nunes$^{1,2}$}
\author{Giulio Iacobelli$^3$}
\author{Daniel Ratton Figueiredo$^2$}
\address{\SMALL $^1$Systems Engineering and Computer Science Program (PESC),  Federal University of Rio de Janeiro (UFRJ), Rio de Janeiro, Brazil.}
\address{\SMALL $^2$Department of Computer Science, UC Irvine (UCI), Irvine, USA}
\address{\SMALL $^3$Mathematical Institute,  Federal University of Rio de Janeiro (UFRJ), Rio de Janeiro, Brazil.} 
\email{\rm \texttt{igord@uci.edu, giulio@im.ufrj.br, daniel@cos.ufrj.br}}
\keywords{uniform spanning tree; random walk; transient analysis; Wilson's algorithm}
\begin{document}
\makeatletter
\pgfdeclareradialshading[tikz@ball]{my ball}{\pgfqpoint{5bp}{10bp}}{%
 color(0bp)=(tikz@ball!10!white);
 color(8bp)=(tikz@ball!65!white);
 color(12bp)=(tikz@ball);
 color(25bp)=(tikz@ball!85!black);
 color(50bp)=(tikz@ball!10!black)}

\pgfdeclareradialshading[tikz@ball]{new ball}{\pgfqpoint{12bp}{12bp}}{%
 color(0cm)=(tikz@ball!40!white);
 color(0.5cm)=(tikz@ball!80!white);
 color(0.75cm)=(tikz@ball);
 color(0.9cm)=(tikz@ball!80!black);
 color(1cm)=(tikz@ball!60!black)} 

\tikzoption{my ball color}{\pgfutil@colorlet{tikz@ball}{#1}\def\tikz@shading{my ball}\tikz@addmode{\tikz@mode@shadetrue}}

\tikzoption{new ball color}{\pgfutil@colorlet{tikz@ball}{#1}\def\tikz@shading{new ball}\tikz@addmode{\tikz@mode@shadetrue}}

\tikzoption{ball shadow color}{\pgfutil@colorlet{tikz@ball}{#1}\def\tikz@shading{ball shadow}\tikz@addmode{\tikz@mode@shadetrue}}
\makeatother

\newcommand{\ball}[4][white]{%
    \begin{scope}[shift = {(#3,#4)}]
      \shade[my ball color=#2] (0.15,0.25) circle (0.45);
    \end{scope}
    \node[color=black] at (.15+#3,.25+#4) {\Large$\mathbf{#1}$};
}

\maketitle

\begin{abstract}

The \emph{Aldous-Broder} and \emph{Wilson} are two well-known algorithms to generate uniform spanning trees (USTs) based on random walks. This work studies their relationship while they construct random trees with the goal of reducing the total time required to build the spanning tree. Using the notion of \textit{branches} -- paths generated by the two algorithms on particular stopping times, we show that the trees built by the two algorithms when running on a complete graph are statistically equivalent on these stopping times. This leads to a hybrid algorithm that can generate uniform spanning trees of complete graphs faster than either of the two algorithms. An efficient two-stage framework is also proposed to explore this hybrid approach beyond complete graphs, showing its feasibility in various examples, including transitive graphs where it requires 25\% less time than \emph{Wilson} to generate a UST. 



\end{abstract}
\section{Introduction}
\label{sec:introduction}

While a graph in general has an exponential number of distinct spanning trees, a fundamental and intriguing question is \textit{how does a ``typical'' spanning tree look like?} The search for an answer brought forth one of the most extensively explored probabilistic objects in graph theory: the \textit{uniform spanning tree} (UST). The \textit{uniform spanning tree} of a simple connected graph $\G = (\V,\E)$, denoted by UST$(\G)$, is the uniform probability measure on the set of all spanning trees of $\G$, denoted by $\T_{\G}$. The study of this object dates back to \textit{Kirchhoff's theorem} \cite{kirchhoff1847ueber}, which provides the size of the set $\T_{\G}$ in terms of the eigenvalues of the Laplacian matrix of $\G$.

From that time on, it has been shown that UST has surprising connections to a rich set of subjects in discrete probability, statistical mechanics and theoretical computer science. Amongst other topics, some examples are: random walks, percolation, gaussian fields, dimers and sandpile models~\cite{aldous2013probability,burton1993local,forman1993determinants,benjamini2001special,kassel2015transfer,jarai2015approaching,kenyon2011spanning}. Uniform spanning trees are also used in generating expander graphs, which have a large number of applications~\cite{hoory2006expander}. Furthermore, UST has played a crucial role in breaking long-standing limits in approximation algorithms for the Travelling Salesman Problem, in both symmetric and asymmetric variations~\cite{gharan2011randomized,asadpour2017log}.

Taking into consideration this wide assortment of applications, it is not surprising that the quest for designing fast algorithms that generate uniform spanning trees has received significant attention over the years. As a consequence of Kirchhoff's work, the first endeavors to algorithmically generate uniform spanning trees were the determinant-based approaches. The first algorithms generated a uniform spanning tree in running time $O(mn^{3})$, where $n=|V|$ and $m=|E|$~\cite{guenoche1983random,kulkarni1990generating}. It was only in the late 1980s that the random walk-based algorithms emerged as a much more promising approach. Such algorithms began with the following notable result due to Aldous \cite{aldous1990random} and Broder \cite{broder1989generating} (both acknowledging Diaconis for previous discussions and insights):
\begin{theorem}[Aldous-Broder]
    \label{thm:aldous-broder}
    Consider a simple random walk $(X_{t})_{t>0}$ on $\G=(\V,\E)$ starting from an arbitrary vertex $r\in \V$. Let $\E_X$ be the set of edges  $\bigcup_{t\geq 0} \big\{\{X_{t}, X_{t+1}\}$ such that $X_{t+1}\neq X_{k}$, for all $k\leq t \big\}$. Then $\tree=(\V, \E_X)$ is a random spanning tree of $\G$ with law {\rm UST}$(\G)$.
\end{theorem}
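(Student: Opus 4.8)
My plan is to realise the Aldous--Broder tree as the stationary state of an auxiliary finite Markov chain obtained from the walk by time-reversal. Write $m=|\E|$ and let $\mathcal A$ be the set of pairs $(\sigma,\rho)$ with $\rho\in\V$ and $\sigma\in\T_\G$ oriented towards $\rho$, so that every $u\neq\rho$ has a unique out-edge $\sigma(u)$. On $\mathcal A$ I would run the chain $\Phi_t=(\Sigma_t,X_t)$ whose step is: from $(\sigma,\rho)$, draw a uniform $\G$-neighbour $v$ of $\rho$ and move to $(\sigma',v)$, where $\sigma'$ adds the arc $\rho\to v$ and deletes the arc $v\to\sigma(v)$ (a one-line check shows $\sigma'$ is again a spanning tree oriented towards $v$). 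First I would record two facts. (i) The second coordinate $(X_t)$ is exactly a simple random walk on $\G$. (ii) If the recursion is run from the empty edge set, then after the walk has departed from every vertex $\Sigma_t$ equals the \emph{last-exit tree} of $(X_0,\dots,X_t)$ --- the tree whose edge out of each $v$ is $\{X_s,X_{s+1}\}$ for $s$ the time of the last departure from $v$ --- because the last-exit edges form a spanning tree (departure times strictly increase along any directed cycle) and the recursion above is precisely how that tree is updated step by step.

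The core of the argument is to identify the stationary law of $\Phi$, and I expect this to be the main obstacle. I would guess $\pi(\sigma,\rho)=\deg(\rho)\big/\big(2m\,|\T_\G|\big)$ --- uniform in $\sigma$ given the root, with $\rho$-marginal equal to the walk's stationary law $\deg(\rho)/2m$ --- and then verify stationarity via a counting statement: each target state $(\sigma',v)$ should have exactly $\deg(v)$ one-step preimages, so that $\sum_{(\sigma,\rho)\to(\sigma',v)}\pi(\sigma,\rho)/\deg(\rho)=\#\{\text{preimages}\}/(2m|\T_\G|)=\pi(\sigma',v)$. To count: since the move adds $\rho\to v$, the old root $\rho$ must be a child of $v$ in $\sigma'$, and the preimage is then determined by the deleted arc $v\to w$; removing $\rho\to v$ from $\sigma'$ splits it into the subtree $S_\rho$ hanging from $\rho$ and its complement, and reconnecting via $v\to w$ yields a genuine tree oriented towards $\rho$ exactly when $w\in S_\rho$ and $\{v,w\}\in\E$. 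As the subtrees $\{S_\rho:\rho\text{ a child of }v\}$ partition $\V\setminus\{v\}$, every $\G$-neighbour $w$ of $v$ lies in exactly one of them and contributes one preimage, giving $\deg(v)$ in all. The chain is plainly irreducible on the finite set $\mathcal A$, so $\pi$ is its unique stationary law.

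It then remains to connect $\Phi$ to the first-entrance construction, the routine part. I would fix $N$, start the walk from its stationary law, and use reversibility of the simple random walk: the reversed path $\hat X_t:=X_{N-t}$ has the same law, and an index chase shows the first-entrance edge of each $v$ in $(X_0,\dots,X_N)$ equals the last-exit edge of $v$ in $(\hat X_0,\dots,\hat X_N)$, so the first-entrance tree of $X_{[0,N]}$ rooted at $X_0$ equals the last-exit tree of $\hat X_{[0,N]}$ rooted at $\hat X_N$. With $\hat X\stackrel{d}{=}X$ and fact (ii) this gives
\[
\PP_{\mathrm{stat}}\big(\text{first-entrance tree of }X_{[0,N]}=t,\ X_0=\rho\big)=\PP\big(\Phi_N=(t,\rho)\big),
\]
$\Phi$ on the right started from the empty edge set with root $\sim\deg(\cdot)/2m$. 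Letting $N\to\infty$, the left side increases to $\PP_{\mathrm{stat}}(\tree=t,\ X_0=\rho)$ --- once the walk covers $\G$ the set $\E_X$ is frozen and $(\V,\E_X)=\tree$ --- while the right side converges to $\pi(t,\rho)$, since the walk a.s.\ covers $\G$ in finite time, after which $\Phi$ runs as the irreducible finite chain with stationary law $\pi$ (to sidestep any aperiodicity issue one passes through Cesàro averages, which suffices because the left side is a convergent sequence). Hence $\PP_{\mathrm{stat}}(\tree=t,\ X_0=\rho)=\deg(\rho)/(2m|\T_\G|)$, and dividing by $\PP_{\mathrm{stat}}(X_0=\rho)=\deg(\rho)/2m$ gives $\PP(\tree=t\mid X_0=\rho)=1/|\T_\G|$ for every $\rho\in\V$, in particular for the prescribed start $r$. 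The one genuinely delicate step is the preimage count behind $\pi$; everything else is bookkeeping around reversal and the covering limit.
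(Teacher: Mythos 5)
The paper does not prove this theorem itself; it quotes it from the cited works of Aldous and Broder, and your argument is, in essence, the classical proof given there (and in standard references): the rooted-tree chain obtained by adding the arc $\rho\to v$ and deleting the out-arc of $v$, the stationary law proportional to $\deg(\rho)$ and uniform over trees given the root, and time reversal converting last-exit trees into first-entrance trees. Your treatment of the two delicate points is sound: the preimage count of a state $(\sigma',v)$ is indeed $\deg(v)$ because the subtrees hanging from the children of $v$ partition $\V\setminus\{v\}$, and the passage to the limit via Cesàro averages correctly sidesteps periodicity while the monotone left-hand side identifies the limit. So the proposal is correct and coincides with the proof the paper points to.
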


In order to generate a spanning tree of a graph $\G$, the random walk can stop at the first time it has visited all the vertices of $\G$, as the tree will be completed exactly when the last node is visited. This improved the running time of generating a UST$(\G)$ to the cover time of $G$ which has an expected value of $O(n\log n)$ for most graphs and $O(mn)$ for the worst cases \cite{broder1989bounds}. 
 

In order to build a spanning tree, the random walk clearly needs to visit every vertex of the
graph at least once. For this reason, it might seem unlikely that a random walk-based algorithm would generate a uniform spanning tree faster than the cover time of the graph. Nevertheless, Wilson~\cite{wilson1996generating} showed it to be possible through the following algorithm based on loop-erased random walks~\cite{lawler1980self}:

\vspace{5pt}\hspace{-.8cm}\textbf{Wilson}$\bm{(\G = (\V,\E),\tree_o)}$\textbf{:}
\begin{itemize}
    \item[0)] $V_X \leftarrow V(\tree_o)$ and $E_X \leftarrow E(\tree_o)$;\footnote{Given a graph $G$, let $\V(G)$ and $\E(G)$ denote its vertex and edge set, respectively.}\\
              If $\tree_o = \varnothing$ then $V_X \leftarrow \{ r \}$ with $r \in \V$ chosen arbitrarily;
    \item While $V_X \neq V$ :
        \begin{itemize}
            \item Choose $v \in V \setminus V_X$ uniformly at random
            \item Start a random walk on $v$ and stop when it first visits a node in $V_X$
            \item Let $p_v$ be the corresponding loop erasure path of the walker\footnote{In loop-erased random walks, the loops are erased in chronological order as they are created.}.
            \item Let $V_X = V_X \cup V(p_v)$ and $E_X = E_X \cup E(p_v)$
        \end{itemize}
    \item Return $\tree=(\V_X,\E_X)$.
\end{itemize}
Note that $\tree_o$ is an initial condition (initial tree, not spanning) of $G$, but for now assumed to be empty: $\tree_o = \varnothing$ (it will be used in Section~\ref{sec:hybrid}). Wilson showed that the expected running time of this algorithm is proportional to the \textit{mean hitting time} of $\G$. The mean hitting time $\mathbb{E}_u(h_{v})$ is the expected number of steps a random walk takes to reach vertex $v$ starting from vertex $u$ and the \textit{mean hitting time} of a graph $G$ is the mean of such times, considering all pairs of vertices. The mean hitting time of a graph is always less than the expected cover time (although its worst-case expected performance is still $O(mn)$~\cite{brightwell1990maximum}). Moreover, the mean hitting time can be considerably smaller for some graphs. On complete graphs for example, the expected cover time is $\Theta(n \log n)$, whereas the mean hitting time is $\Theta(n)$ (see Section~\ref{sec:hybrid} for a more detailed discussion). However, \textit{Matthew's bound} \cite{matthews1988covering} establishes that for any graph, the expected cover time is never larger than $H_{n-1}$ times the mean hitting time ($H_{n}$ is the $n$-th harmonic number which is $\Theta(\log n)$).

While both \textit{Aldous-Broder} and \textit{Wilson} algorithms are based on random walks, their behavior is fundamentally different in general. Consider the last few vertices to be added to the spanning tree; in \textit{Aldous-Broder} the random walk may wander a long time before covering these vertices, while in \textit{Wilson} they will be rather quickly added to the tree. In sharp contrast, when considering the first few vertices to be added to the spanning tree, \textit{Aldous-Broder} will quickly add them but \textit{Wilson} may wonder a long time and make loops until it reaches the first anchor node. Intuitively, \textit{Aldous-Broder} is more efficient in the beginning, to generate the initial vertices of the tree, and Wilson is more efficient in the end, to complete the spanning tree. Can this intuition be formally explored to combine the two approaches into a more efficient algorithm? This is the main theme addressed in this paper. 

One of the main contributions of this work is a positive result on combining the two algorithms. In fact, considering a complete graph with self-loops and an adequate sequence of stopping times (to be properly defined in Sections~\ref{sec:aldous} and~\ref{sec:wilson}), the two algorithms are shown to be equivalent in their transient behavior. More formally: 
\begin{restatable*}[\textit{Aldous-Broder} and \textit{Wilson} transient equivalence]{theorem}{equivalence}
Let \textit{Aldous-Broder} and \textit{Wilson} algorithms be initialized with the same vertex $r$ and let the input graph $\G$ be a complete graph with self-loops. Then, there exist two sequences of stopping times $(\hat{\sigma}_{i})_{i\geq0}$ (for \textit{Wilson}) and $(\sigma^{\rm in}_{i})_{i\geq0}$ (for \textit{Aldous-Broder}) such that, for every $i\geq 0$
    \[\tree^{W}_{\hat{\sigma}_{i}} \buildrel d \over = \tree^{AB}_{\sigma^{\rm in}_{i}}\;,
\]
where $\tree^{W}_{\hat{\sigma}_{i}}$ and $\tree^{AB}_{\sigma^{\rm in}_{i}}$ are the trees constructed by \emph{Wilson} and \emph{Aldous-Broder} by time $\hat{\sigma}_{i}$ and $\sigma^{\rm in}_{i}$, respectively.
\label{thm:equivalence}
\end{restatable*}
This result leads to a hybrid algorithm that starts with \textit{Aldous-Broder} and then switches to \textit{Wilson} to finish generating a spanning tree of a complete graph. Theoretical analysis and numerical simulations show that this approach has a smaller running time than either of the two algorithms (by a constant factor when considering \textit{Wilson}). 

The next contribution is a two-phase framework to explore such hybrid approach in a more general setting, beyond complete graphs. The first phase generates a tree of $G$ according to a distribution which satisfies a ``convenient'' condition (properly defined in Section~\ref{sec:framework}). The second phase finishes by generating a uniform random spanning tree given the initial tree. We show that Wilson's algorithm does the job for second phase, captured by the following lemma:
\begin{restatable*}{lemma}{secondstep}
Let $\G=(\V,\E)$  be a connected graph and  $\subtree$ be a tree (not spanning) of $\G$. Wilson's algorithm with initial condition  $\subtree$ returns a  spanning tree of $\G$ which is uniformly distributed on the set $\{\spatree \in \T_{\G}: \spatree \supseteq   \subtree\}$.
\label{lem:second_step}
\end{restatable*}
Again, \emph{Wilson} with an initial condition is a natural concept which consists in passing the tree $\subtree$ as the argument for $\tree_o$ (see Wilson's algorithm above). While generating an initial tree $\subtree$ that satisfies the condition for the first phase is not trivial, several examples where this is feasible and efficient are provided. For example, when considering any edge-transitive graph $G$, a random edge of $G$ satisfies the condition for the first phase (see Example~3 in Section~\ref{sec:framework}). Thus, the two-phase framework can be applied to generate UST in the hypercube with an average running time is asymptotically 25\% smaller than the average running time of \textit{Wilson} (shown numerically and through a conjecture, see Section~\ref{sec:hypercubes}). Intuitively, a larger initial tree will require less time for \textit{Wilson} finish generating the spanning tree in the second phase. However, even when the initial tree is just an edge the proposed framework delivers a significant improvement in the running time. Thus, an open challenge for broadening the use of the framework is generating adequate initial trees in other graphs, even if arbitrarily small. 


Of side interest, the framework can be adapted to generate random trees with probability proportional to the number of trees that are isomorphic to the initial tree provided as input (see Appendix~\ref{sec:linearly_biased}). More specifically, given a tree $\subtree$ with $k$ vertices, a random tree $\spatree$ with $n \geq k$ vertices can be generated in linear time with probability proportional to the number of sub-trees of $\spatree$ that are isomorphic to $\subtree$. This finds application in the area of generating random graphs (trees) that have a probability law for pre-defined subgraphs (sub-trees)~\cite{li2005sampling,deo1997computation,liu2005maximizing}. 

\subsection{Recent Advancements}

Wilson's algorithm opened the doors to algorithms that can generate UST more efficiently than the cover time of a random walk. Since then, a sequence of improved algorithms have emerged in the literature. Kelner and M{\k{a}}dry~\cite{kelner2009faster} proposed a method based on the Aldous-Broder algorithm for generating approximately uniform spanning trees by trying to skip unnecessary steps, i.e., steps over already-covered regions of the graph (named the \textit{shortcutting} strategy). The method returns a tree $\spatree$ with probability $p(\spatree)$, where $(1-\delta)/|{\mathcal T}_{G}| \leq p(\spatree) \leq (1+\delta)/|{\mathcal T}_{G}|$ for some $\delta > 0$, in $\tilde{O}(m\sqrt{n}\log 1/\delta)$ time ($\tilde{O}$ refers to the \textit{soft-O} notation, which is simply \textit{big-O} ignoring logarithmic factors~\cite{cormen2009introduction}). Later on, the strategy was proved to work for generating uniform (not approximately) spanning trees too~\cite{mkadry2011graphs}, resulting in a procedure with $\tilde{O}(m\sqrt{n})$ expected running time in general graphs.

Based on this last result and exploring the effective resistance metric~\cite{tetali1991random} and its relationship to random walks, M{\k{a}}dry \textit{et al.}~\cite{madry2015fast} proposed an approach that improved the performance to $\tilde{O}(m^{4/3})$ in general (and even better for sufficiently sparse graphs). Finally, based on these last results and a number of new facts about electrical flows, Schild~\cite{schild2018almost} introduced another shortcutting strategy that resulted in a method that generates an UST with an expected running time that is almost-linear on the number of edges, namely $O(m^{1+o(1)})$. It is worth mentioning that the effective resistance metric was also the basis for another recent and promising algorithm by Durfee \textit{et al.} \cite{durfee2017sampling}, which eschews from both determinant and random walk-based approaches and uses Gaussian elimination to generate USTs with high probability in $O(n
^{5/3}m^{1/3})$.

It is important to point out that while this work also addresses the efficient generation of USTs, our goal is not to compete directly with the state-of-the-art approaches. Instead, we provide a novel framework that is intended to give a new perspective into strategies for efficiently generating USTs, which can perhaps be combined with existing approaches (e.g., shortcutting) to yield even further improvements.

\section{Aldous-Broder branch distribution}
\label{sec:aldous}

This section starts by providing an interpretation for the evolution of the \emph{Aldous-Broder} algorithm that is instrumental in establishing its connection with the \emph{Wilson} algorithm. In particular, an  \textit{urn model} is defined and used to generate random spanning trees. This process is shown to be equivalent to \emph{Aldous-Broder} running on the complete graph (with self loops), including their transient behavior.

Lets start by precisely defining the \emph{Aldous-Broder} algorithm, and introducing some important notation:



%
\vspace{5pt}\hspace{-.8cm}\textbf{Aldous-Broder}$\bm{(\G = (\V,\E))}$\textbf{:}
\begin{itemize}
    \item[0)] Set ${\E_X} \leftarrow \varnothing$ and $X_0 \leftarrow r$, with $r \in \V$ chosen arbitrarily;
    \item While $|\E_X| \neq |\V| -1$:
        \begin{itemize}
            \item Run a simple random walk $(X_{t})_{t\geq 0}$ on $\G$, starting at $X_{0}$ and 
     for each edge $e = \{X_{t},X_{t+1}\}$ such that $X_{t+1} \neq X_{k}$ for all $k \leq t$,  set ${\E_X} \leftarrow {\E_X} \cup \{e\}$;
        \end{itemize}
    \item Return $\tree=(\V,\E_X)$.
\end{itemize}
Note that $r$ is a special vertex, chosen arbitrarily, and the starting point of the random walk. Moreover, the path taken by the walker $(X_{t})_{t\geq 0}$ can be divided into alternating sequences of vertices: vertices that the walker has already visited and vertices being visited for the first time. The following double sequence of stopping times capture the start time of each kind of excursion. Let $V_{t}$ denote the set of vertices of $\G$ visited by the random walk up to time $t$, i.e.,  $V_{t} = \bigcup_{k=0}^{t}\big\{X_{k}\big\}$, and $\eta$ the cover time of $\G$. Let $\sigma^{\rm in}_0\equiv 0$, and for $i>0$, 
\begin{align*}
\sigma^{\rm out}_{i}&:= \inf\{t>\sigma^{\rm in}_{i-1}: X_t \notin V_{t-1}\}\wedge \eta\;,
\\
\sigma^{\rm in}_{i}&:= \inf\{t>\sigma^{\rm out}_{i}: X_t \in V_{t-1}\}\wedge \eta \;,
\end{align*}
where $\sigma^{\rm in}$ and $\sigma^{\rm out}$ capture the start time of an excursion through vertices already visited and vertices being visited for the first time, respectively. Since $\G$ is finite and connected, $\eta$ is finite almost surely, and thus these stopping times are all finite. 


These stopping times can be used to define a sequence of \emph{branches} constructed by the random walk $(X_t)_{t\geq 0}$, which represents the excursions through vertices being visited for the first time. Let $B_i$ denote the $i$-th  branch, $i\geq 1$, and be defined by the sequence $\{X_{\sigma^{\rm out}_{i}}, \ldots, X_{(\sigma^{\rm in}_{i}-1)}\}$. A branch is a path graph that starts and ends on the first and last node of the sequence $B_i$, respectively. Note that the edge $\{X_{\sigma^{\rm out}_{i}-1}, X_{\sigma^{\rm out}_{i}}\}$ does not belong to any branch, but is the edge that connects branch $B_i$ to the set of nodes already visited. 

%

Figure~\ref{fig:branches} illustrates these definitions with an example of the evolution of the  \emph{Aldous-Broder} algorithm in a $5\times 5$ grid graph. The start vertex $r$ corresponds to vertex 16 and the construction of the first two branches is shown. The current position of the walker is represented by a square and the colored vertices are those already visited by the walker, where each color indicates a different branch. Bold edges represent the set of first-entrance edges used to construct the tree according to Theorem~\ref{thm:aldous-broder}. Dashed bold edges indicate the edges that  do not belong to any branch. 

\input{imgs/branches}

Let $\tree^{AB}_{\sigma^{\rm in}_i}$ be the random tree of $\G$ (not necessarily a spanning tree) constructed by the \emph{Aldous-Broder} algorithm when considering the first-entrance edges up to time $\sigma^{\rm in}_i$, i.e., $\bigcup_{t<\sigma^{\rm in}_i}\left\{\{X_t, X_{t+1}\} : X_{t+1}\neq X_k\, \forall k\leq t\right\}$. Note that $\tree^{AB}_{\sigma^{\rm in}_i}$ corresponds to the first $i$ branches along with their corresponding special edge that connects the branch to the existing tree. For example, the trees highlighted in Figures~\ref{fig:TAB1} and~\ref{fig:TAB2} are, respectively, $\tree^{AB}_{\sigma^{\rm in}_1}$ and $\tree^{AB}_{\sigma^{\rm in}_2}$. Figure~\ref{fig:branches} shows an example of all branches constructed by the \emph{Aldous-Broder} algorithm when running on a complete graph with a hundred vertices (each color is a branch). By the \textit{Aldous-Broder} theorem, this is a uniform spanning tree of $\G$. 

\begin{figure}
\centering
\makebox[\linewidth][c]{\includegraphics[width=.77\textwidth]{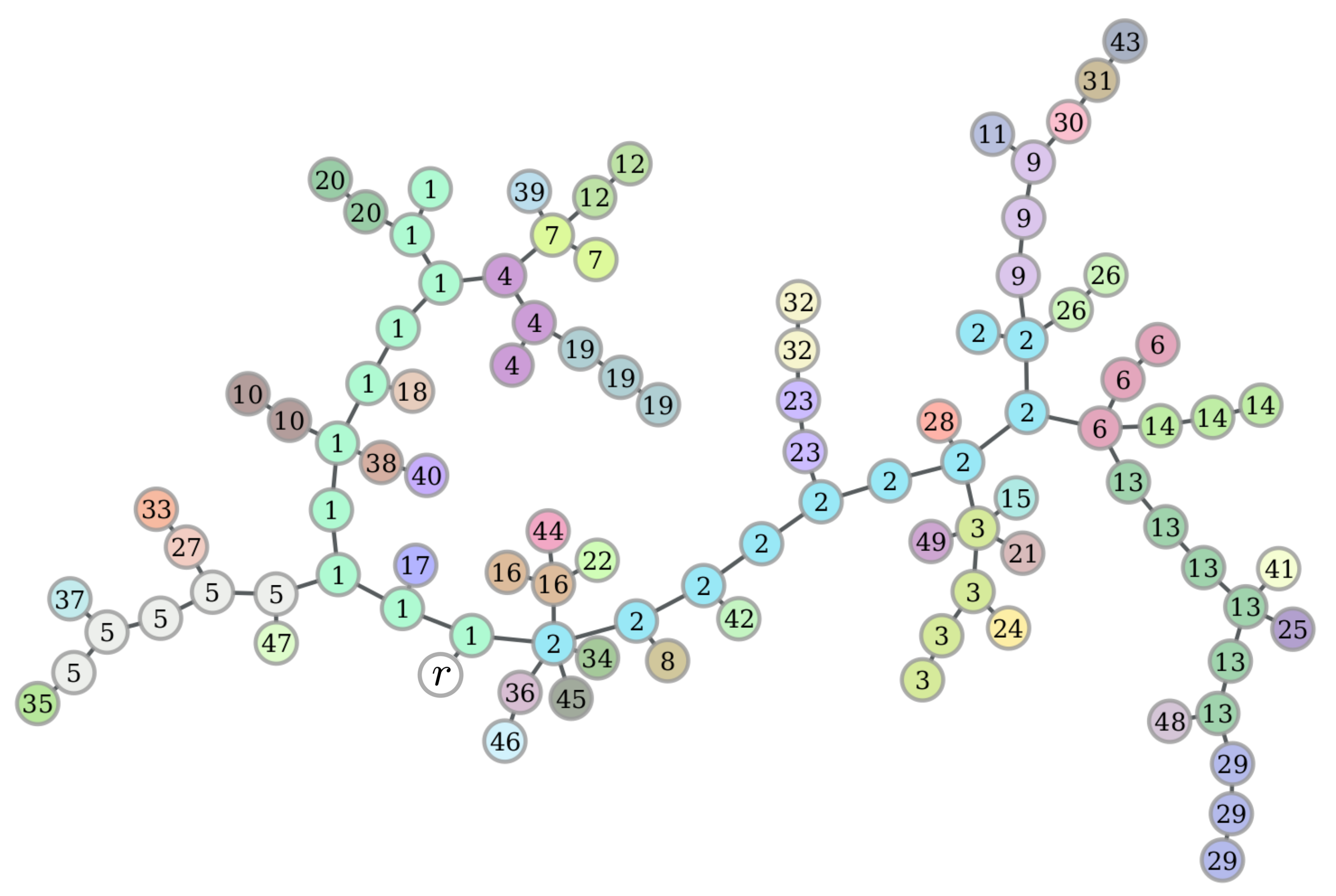}}
\caption{Branches of a uniform spanning tree of the complete graph with a hundred vertices generated by the \emph{Aldous-Broder} algorithm. Each color corresponds to a branch and vertices are numbered according to their branch number, with the exception of the initial vertex $r$ which does not belong to any branch.}
\end{figure}

\subsection{The urn model}
\label{sec:urn}

Consider an urn $U=\{b_1, \ldots, b_{n}\}$ containing $n$ uncolored but labelled balls, and let $\{c_{0}, \dots, c_{n-1}\}$ be a set of $n$ distinct colors. Consider the following color-assignment process, where $A$ denotes the set of uncolored balls at any point in time during the process: 
%
%
\begin{itemize}
    \setlength\itemsep{-1pt}
    \item[0)] Let $r \in \{1,\ldots,n\}$ be chosen arbitrarily; and paint $b_r$ with $c_0$; \\
              Set $A = U\setminus \{b_r\}$; \\
              Set $i \leftarrow 1$, let $b$ denote an uniformly chosen ball from $A$, paint $b$ with color $c_i$, return it to the urn;
    \item[1)] Let $b$ denote an uniformly chosen ball from $U$:
        \begin{itemize}
            \setlength\itemsep{-1pt}
            \item if $b\in A$ (the ball is uncolored), paint it with color $c_i$, return it to the urn;
            \item if $b\in U\setminus A$ (ball is already colored), return it to the urn, set $i \leftarrow i+1$, paint an uniformly chosen ball from $A$ with color $c_i$;
        \end{itemize}
    \item[2)] If $A \neq \emptyset$, repeat item $1)$, otherwise stop.    
\end{itemize}

Note that the color-assignment executes step $1)$ exactly $n-2$ times, since exactly one uncolored ball is colored each time a ball is drawn from the urn. Let $C_i$ denote the sequence of balls drawn that were painted with color $c_{i}$, for $i=0,\ldots,n-1$, where $C_i = \emptyset$ if color $c_i$ is not used. 
Figure~\ref{fig:urn} shows an example of the color-assignment process with $n=10$ balls, where $c_1 = $ \fcolorbox{black}{teal}{\rule{0pt}{3pt}\rule{3pt}{0pt}}, $c_2 = $ \fcolorbox{black}{Purple}{\rule{0pt}{3pt}\rule{3pt}{0pt}}, $c_3 = $ \fcolorbox{black}{Dandelion}{\rule{0pt}{3pt}\rule{3pt}{0pt}} and $c_4 = $ \fcolorbox{black}{RedOrange}{\rule{0pt}{3pt}\rule{3pt}{0pt}}:
\vspace{.8cm}

\begin{figure}[H]
\centering
\scalebox{.6}{
\begin{tikzpicture}
    \draw[line width=1] (-0.5,0.5) -- (-0.5,-0.3) -- (9.8,-0.3) -- (9.8,0.5);
        \ball[1]{Dandelion}{7}{0}
        \ball[2]{teal}{2}{0}
        \ball[3]{teal}{3}{0}
        \ball[4]{teal}{2}{0}
        \ball[5]{DarkOrchid}{5}{0}
        \ball[6]{white}{0}{0}
        \ball[7]{Orange}{8}{0}
        \ball[8]{teal}{1}{0}
        \ball[9]{DarkOrchid}{6}{0}
        \ball[10]{Orange}{9}{0}
        \ball[11]{teal}{4}{0}
\end{tikzpicture}}
\caption{Example of the color-assignment process with $n=10$ balls: the initial ball was 6 and four other colors were used.}
\label{fig:urn}
\end{figure}
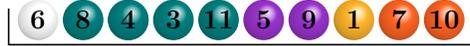

Let $|C_i|$ denote the number of balls colored with color $i$, where $|C_0| = 1$ by definition. The following lemma establishes the conditional distribution for this number given the total number of colored balls. 
\begin{lemma}\label{lem:urn-branch}
For every $i>0$, $k\in \{1, \ldots, n-1\}$, and $h \in \{1, \ldots, n-k\}$, 
\[\PP_U\left(|C_i|=h \left\vert\;  \sum_{j=0}^{i-1} |C_j|=k \right.\right)= \frac{(k+h)(n-k-1)!}{n^{h}(n-k-h)!} \;.\]
\end{lemma}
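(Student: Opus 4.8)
The plan is to condition on the event $E:=\{\sum_{j=0}^{i-1}|C_j|=k\}$ and then track the size of the set $A$ of uncolored balls during the phase in which color $c_i$ is assigned. The first thing I would record is that, on $E$, at the moment color $c_i$ is first painted on some ball --- either in step $1)$ through the transition ``increment $i$, paint a uniformly chosen ball from $A$ with $c_i$'', or, when $i=1$, already in step $0)$ --- exactly $k+1$ balls are colored: the $k$ balls carrying colors $c_0,\dots,c_{i-1}$, plus the ball just painted with $c_i$. Hence at that instant $|A|=n-k-1$, which is $\ge 0$ because $k\le n-1$, and the process has not stopped. The second, and crucial, point is structural: the draws made in step $1)$ are i.i.d.\ uniform on $U=\{b_1,\dots,b_n\}$, and the event $E$ --- together with the label of the ball first painted $c_i$ and the set $A$ at that moment --- is measurable with respect to the draws made strictly before $c_i$ was started. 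So, conditionally on this history, the subsequent step-$1)$ draws are still i.i.d.\ uniform on the $n$ balls, and the set of $n-k-1$ uncolored balls is deterministic.

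Given that reduction, I would phrase the coloring of $c_i$ as a short sequence of trials: if the first $j-1$ step-$1)$ draws after $c_i$ started all landed on uncolored balls, then $j-1$ further balls received color $c_i$, so just before draw $j$ we have $|A|=n-k-j$. Consequently, for $1\le h<n-k$ the event $\{|C_i|=h\}$ is exactly ``draws $1,\dots,h-1$ land in $A$ and draw $h$ lands on an already-colored ball'', while the boundary event $\{|C_i|=n-k\}$ is exactly ``draws $1,\dots,n-k-1$ land in $A$'' (after which $A=\varnothing$ and the process halts, so no further draw is made). By the conditional uniformity, the probability that draw $j$ lands in $A$, given the first $j-1$ did, is $(n-k-j)/n$, and for $h<n-k$ the probability that draw $h$ lands on a colored ball, given the first $h-1$ landed in $A$, is $(k+h)/n$ (before draw $h$ there are $k+1+(h-1)=k+h$ colored balls).

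Multiplying these out I would obtain, for $1\le h<n-k$,
\[
\PP_U(|C_i|=h\mid E)=\frac{k+h}{n}\prod_{j=1}^{h-1}\frac{n-k-j}{n}=\frac{k+h}{n^{h}}\cdot\frac{(n-k-1)!}{(n-k-h)!},
\]
using $\prod_{j=1}^{h-1}(n-k-j)=(n-k-1)(n-k-2)\cdots(n-k-h+1)=(n-k-1)!/(n-k-h)!$, which is the claimed expression; and for $h=n-k$ the same telescoping product gives $\PP_U(|C_i|=n-k\mid E)=(n-k-1)!/n^{\,n-k-1}$, which agrees with the right-hand side of the lemma because there $(k+h)/n=1$ and $(n-k-h)!=0!=1$, so the single closed form covers both cases. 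The step I expect to be the main obstacle --- really the only subtle one --- is the conditioning: I need to argue cleanly that conditioning on $E$ neither biases the number of uncolored balls present when $c_i$ starts (on $E$ it is the deterministic value $n-k-1$) nor the uniformity/independence of the later draws, and that the seed ball $b_r$ and the $i=1$ case (where $E$ forces $k=1$ and the first $c_1$ ball is painted in step $0)$) are treated on the same footing as a generic transition with $i\ge 2$. Once the process is recast as i.i.d.\ uniform draws against a known, fixed set of uncolored balls, the remainder is just the elementary product computation above.
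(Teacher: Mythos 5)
Your proof is correct, and it is exactly the ``straightforward'' direct computation the paper has in mind (the paper omits the proof entirely, remarking only that it is straightforward): condition on the history up to the start of color $c_i$, note the remaining step-$1)$ draws are still i.i.d.\ uniform on $U$, and multiply the telescoping factors $\prod_{j=1}^{h-1}\frac{n-k-j}{n}\cdot\frac{k+h}{n}$, checking that the boundary case $h=n-k$ is absorbed by the same closed form. Your care with the conditioning and with the $i=1$/seed-ball case is exactly the right bookkeeping; nothing is missing.
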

While its proof is straightforward, this lemma will be useful later when establishing the equivalence between \emph{Aldous-Broder} and \emph{Wilson} algorithms.

Let $C=(C_0, \ldots, C_{n-1})$ denote the random sequences of colored balls obtained from the color-assignment urn process. The following algorithm generates a random tree with $n$ vertices using the outcome of the color-assignment process for a urn with $n$ balls. Let $V = \{1,\ldots,n\}$ denote the set of vertices and let ball $b_j$ correspond to vertex $j$. 

\clearpage
\textbf{Urn-Tree}$\bm{(C)}$\textbf{:}
\begin{itemize}
    \item[0)] Set $\V_{T} \leftarrow \{r\}$, $\E_{T} \leftarrow \varnothing$ and $i \leftarrow 1$;
    \item While $C_i \neq \emptyset$: 
    \begin{itemize}
        \item Let $P$ denote a path graph according to the sequence $C_i$, where vertices correspond to balls and edges are placed between adjacent vertices in the sequence;
        \item Let $u$ denote an uniformly chosen vertex from $V_{T}$;
        \item Let $v$ denote the first ball of the sequence $C_i$;
        \item Let $\V_{T} \leftarrow \V_{T} \cup \V(P)$ and $\E_{T} \leftarrow \E_{T} \cup \E(P) \cup \{(u,v)\}$;
        \item $i \leftarrow i + 1$;
    \end{itemize}
    \item Return $\tree=(\V_{T}, \E_{T})$.
\end{itemize}

The \textbf{Urn-Tree} algorithm starts the tree construction process with the initial ball, which corresponds to the initial vertex. It then iterates through the used colors constructing a path graph according to the sequence of balls and attaching this path to the existing tree, with the the anchor vertex for the path chosen uniformly at random. The process finishes when all used colors have been considered. Figure~\ref{fig:urn-tree} shows an example of a tree generated by the \textit{Urn-Tree} algorithm when the input is the sequence of colored balls provided by Figure~\ref{fig:urn}.

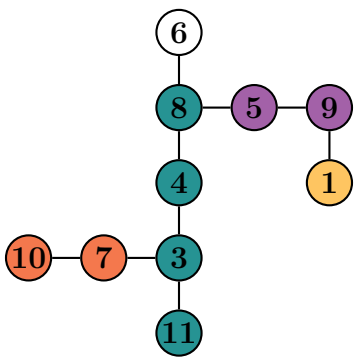
\begin{figure}
    \centering
    \scalebox{1}{\begin{tikzpicture}[auto,node distance=10mm, thick,main node/.style={circle,fill=blue!20,draw,minimum size=.6cm,inner sep=0pt]}]
        \node[main node,fill=white] (1) {$\mathbf{6}$};
        \node[main node,fill=teal!85] (2) [below of=1] {$\mathbf{8}$};
        \node[main node,fill=teal!85] (3) [below of=2] {$\mathbf{4}$};
        \node[main node,fill=teal!85] (4) [below of=3] {$\mathbf{3}$};
        \node[main node,fill=teal!85] (5) [below of=4] {$\mathbf{11}$};
        \draw[-] (1) edge (2);
        \draw[-] (2) edge (3);
        \draw[-] (3) edge (4);
        \draw[-] (4) edge (5);
        \node[main node,fill=Purple!85] (6) [right of=2] {$\mathbf{5}$};
        \node[main node,fill=Purple!85] (7) [right of=6] {$\mathbf{9}$};
        \draw[-] (6) edge (7);
        \draw[-] (6) edge (2);
        \node[main node,fill=Dandelion!85] (8) [below of=7] {$\mathbf{1}$};
        \draw[-] (8) edge (7);
        \node[main node,fill=RedOrange!85] (9) [left of=4] {$\mathbf{7}$};
        \node[main node,fill=RedOrange!85] (10) [left of=9] {$\mathbf{10}$};
        \draw[-] (9) edge (10);
        \draw[-] (9) edge (4);
\end{tikzpicture}}
    \caption{Example of a tree generate by the \textit{Urn-Tree} algorithm when using as input the sequence of colored balls shown in Figure~\ref{fig:urn}.}
    \label{fig:urn-tree}
\end{figure}



Consider the \emph{Aldous-Broder} algorithm on a complete graph with $n$ vertices and the \textit{Urn-Tree} on an urn with $n$ balls. While both algorithms build random trees with $n$ vertices, they correspond to significantly different random processes. An important difference is their running time. While the expected running time of \emph{Aldous-Broder} is $\Theta(n \log n)$ for a complete graph with $n$ vertices (see Section~\ref{sec:hybrid}), the color-assignment process and the \textit{Urn-Tree} algorithm has a worst case running time of $\Theta(n)$ for an urn with $n$ balls. 


%

Interestingly, the following theorem establishes not only that the \emph{Urn-tree} algorithm generates a uniform spanning tree of the complete graph with self-loops, but also that the \emph{Urn-tree} is equivalent to the transient behavior of the \textit{Aldous-Broder} algorithm running on a complete graph with self-loops in the stopping times that finish \emph{branches}. 

Let $\tree^{U}_{i}$ denote the random tree generated by \textit{Urn-tree} algorithm right after the attachment of the $i$-th path graph. Note that the number of vertices in $\tree^{U}_{i}$, henceforth denoted by $|\tree^{U}_{i}|$, is equal to $\sum_{j=0}^{i}|C_j|$. 

\begin{theorem}[Urn-Tree and Aldous-Broder transient equivalence]
The random tree $\tree^{AB}_{\sigma^{\rm in}_i}$ generated by \textit{Aldous-Broder} algorithm on a complete graph with self-loops with $n$ vertices and the random tree $\tree^{U}_{i}$ on an urn $U$ with $n$ balls, when initialized with the same vertex $r$, have the same distribution in the sense that $\tree^{U}_{i} \buildrel d \over = \tree^{AB}_{\sigma^{\rm in}_{i}}$ , for all $i=1, \ldots, n-1$ 


\label{thm:urn aldous equivalence}
\end{theorem}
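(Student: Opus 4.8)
The plan is to prove $\tree^{U}_i \buildrel d \over = \tree^{AB}_{\sigma^{\rm in}_i}$ by induction on $i$, by showing that the two processes have the same one-step transition law: conditionally on the tree built so far, the next \emph{branch} (together with the edge attaching it) has the same distribution in both cases. The observation that makes this work is that on the complete graph with self-loops a simple random walk is just an i.i.d.\ sequence --- $X_0 = r$ and $(X_t)_{t\ge 1}$ are i.i.d.\ uniform on $\{1,\dots,n\}$ --- so that each step of the walk is literally a draw of a ball from the urn $U$. Thus the mechanism that builds branch $B_i$ in \textit{Aldous-Broder} (keep appending not-yet-visited vertices until an already-visited vertex is hit) is, step for step, the mechanism that builds the $i$-th colour class $C_i$ in the urn model; the only extra feature of \textit{Aldous-Broder} is the ``wandering'' excursion through visited vertices that separates consecutive branches, and which contributes nothing to $\tree^{AB}$.

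To make this precise I would first record two elementary consequences of the i.i.d.\ structure and the strong Markov property at the stopping time $\sigma^{\rm in}_{i-1}$. First, conditionally on $\tree^{AB}_{\sigma^{\rm in}_{i-1}}$, the vertex $X_{\sigma^{\rm in}_{i-1}}$ where branch $B_{i-1}$ terminates is \emph{uniform} on $V(\tree^{AB}_{\sigma^{\rm in}_{i-1}})$ and independent of its shape: it is the outcome of a fresh uniform draw conditioned to fall in the visited set. Second, the subsequent draws $X_{\sigma^{\rm in}_{i-1}+1}, X_{\sigma^{\rm in}_{i-1}+2},\dots$ are i.i.d.\ uniform on $\{1,\dots,n\}$ and independent of $\mathcal F_{\sigma^{\rm in}_{i-1}}$. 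Writing $k = |\tree^{AB}_{\sigma^{\rm in}_{i-1}}|$ and following this sequence until it first leaves the visited set (the wandering excursion, of geometric length) and then until it first returns to the visited set (the branch $B_i$), one reads off: the anchor vertex $u_i := X_{\sigma^{\rm out}_i-1}$ is the last visited-set-valued entry before the first escape, hence --- since a uniform draw conditioned to lie in the visited set is uniform on it --- conditionally uniform on $V(\tree^{AB}_{\sigma^{\rm in}_{i-1}})$; the length $|B_i|$ has exactly the distribution of Lemma~\ref{lem:urn-branch} with this $k$; given its length, $B_i$ is a uniformly random ordered tuple of distinct not-yet-visited vertices; and --- crucially --- $u_i$, $|B_i|$ and the labelling of $B_i$ are conditionally independent given $\tree^{AB}_{\sigma^{\rm in}_{i-1}}$. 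By direct inspection this is exactly the conditional law, given $\tree^{U}_{i-1}$, of the pair (uniformly chosen anchor $u$, path on $C_i$) that \textbf{Urn-Tree} uses to go from $\tree^{U}_{i-1}$ to $\tree^{U}_i$, again with $k = |\tree^{U}_{i-1}| = \sum_{j=0}^{i-1}|C_j|$ and $|C_i|$ governed by Lemma~\ref{lem:urn-branch}.

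The induction is then immediate: $\tree^{AB}_{\sigma^{\rm in}_0} = (\{r\},\varnothing) = \tree^{U}_0$ deterministically, and if $\tree^{U}_{i-1} \buildrel d \over = \tree^{AB}_{\sigma^{\rm in}_{i-1}}$ then, since the conditional law of the $i$-th tree given the $(i-1)$-th tree is in both processes the same measurable function of the $(i-1)$-th tree, we get $\tree^{U}_i \buildrel d \over = \tree^{AB}_{\sigma^{\rm in}_i}$; one also checks the obvious bookkeeping at termination (once the tree spans $\G$ no further branch or colour is produced, so the two processes remain equidistributed for all remaining $i\le n-1$). The first assertion of the theorem --- that \textbf{Urn-Tree} outputs a {\rm UST} of the complete graph with self-loops --- then follows by taking $i$ to be the total number of branches, since at that $i$ the tree $\tree^{AB}_{\sigma^{\rm in}_i}$ is the complete output of \textit{Aldous-Broder}, which is {\rm UST} by Theorem~\ref{thm:aldous-broder}.

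The main obstacle I anticipate is making the two elementary consequences above genuinely rigorous, i.e.\ carefully disentangling the wandering excursion from the branch and proving the uniformity of the anchor $X_{\sigma^{\rm out}_i-1}$ on the current vertex set: this is the one place where one must be attentive to the stopping-time manipulations, in particular to the degenerate case $\sigma^{\rm out}_i = \sigma^{\rm in}_{i-1}+1$ in which the anchor coincides with $X_{\sigma^{\rm in}_{i-1}}$ rather than with a vertex produced by the wandering, and to the conditional independence of the anchor from $B_i$ (which is again where the i.i.d.\ structure, rather than mere Markovianity, is used). Everything else --- identifying the branch-length law with Lemma~\ref{lem:urn-branch}, checking that a branch is a uniform ordered tuple given its length, and matching the attachment rule with that of \textbf{Urn-Tree} --- is a routine comparison of the two constructions.
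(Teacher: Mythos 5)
Your proposal is correct and takes essentially the same route as the paper's proof: matching the conditional branch-length law with Lemma~\ref{lem:urn-branch}, showing the anchor $X_{\sigma^{\rm out}_i-1}$ is uniform over the current tree via the fact that on the complete graph with self-loops the walk's steps are uniform draws, and noting that branch labels are uniform over the unused labels. Your explicit induction on the one-step transition kernel (with the degenerate-case and independence bookkeeping) is just a more formal packaging of the argument the paper gives.
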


\begin{proof}

Note that since both processes are initialized with the same special vertex, the first node of both random trees is the same, in particular $X_0 = C_0 = r$. Let $Z_i$ be the random variable denoting the number of edges in the $i$-th branch of a random walk $X$ on a complete graph with self-loops with $n$ vertices, as previously defined. Let $\PP_X$ denote the probability measure defined on the space of trajectories of $(X_t)_{t\geq 0}$, for $i>0$ and $k \in \{1,\ldots, n-1\}$,  
\begin{align*}
\PP_X\left(Z_i=h \; \left\vert \; |\tree_{\sigma^{\rm in}_{i-1}}|\right.=k \right) = \PP_U\bigg(|C_i|=h \;\Big\vert \;\sum_{j=0}^{i-1}|C_j|=k \bigg)\;,
\end{align*}
for every $h \in \{1,\ldots, n-k\}$. In essence, the random walk in Aldous-Broder can be interpreted as painting vertices as it traverses the graph until it hits an already painted vertex. Once it hits a painted vertex, the walker may wander around painted vertices and will eventually start another painting journey in a randomly chosen unpainted vertex. This process is identical to the urn process since the graph is complete and has self-loops. Thus, branch sizes and number of colored balls have the same corresponding conditional distribution. 

%

Recall that the \textit{Urn-Tree} algorithm attaches the path graph corresponding to the sequence $C_i$ to a uniformly chosen vertex in $\tree^U_{i-1}$. To finish the proof it is enough to show that this is also the case for the branches in Aldous-Broder. Recall that the edge $\{X_{\sigma^{\rm out}_{i}-1}, X_{\sigma^{\rm out}_{i}}\}$ connects the $i$-th branch constructed by Aldous-Broder to the existing tree. It suffices to show that the distribution of $X_{(\sigma^{\rm out}_{i-1}-1)}$ is uniform over the vertices of $\tree^{AB}_{\sigma^{\rm in}_{i-1}}$. Note that at time $\sigma^{\rm in}_{i-1}$, the $(i-1)$-th branch has just finished and since the graph is complete and has self-loops, $X_{\sigma^{\rm in}_{i-1}}$ is uniformly distributed over $\tree^{AB}_{\sigma^{\rm in}_{i-1}}$. 
At time $t=\sigma^{\rm in}_{i-1} +1$, either the $i$-th branch starts, i.e., $\sigma^{\rm out}_{i-1}=t$, which happens with probability $(n-|\tree^{AB}_{\sigma^{\rm in}_{i-1}}|)/n$,  or the random walk moves to a vertex of $\tree^{AB}_{\sigma^{\rm in}_{i-1}}$ with probability $|\tree^{AB}_{\sigma^{\rm in}_{i-1}}|/n$, choosing uniformly among the vertices in tree. This in particular implies that for every $k\geq 1$, the distribution of $X_{\sigma^{\rm out}_{i-1}-1}$ conditioned on $\sigma^{\rm out}_{i-1}=\sigma^{\rm in}_{i-1}+k$, is uniform  on $\tree^{AB}_{\sigma^{\rm in}_{i-1}}$. 
Thus, $X_{(\sigma^{\rm out}_{i-1}-1)}$  is uniform over the vertices of $\tree^{AB}_{\sigma^{\rm in}_{i-1}}$ and $\tree^{U}_{i} \buildrel d \over = \tree^{AB}_{\sigma^{\rm in}_{i}}$.

A final remark concerns the labels of the vertices in the trees $\tree^{U}_{i}$ and $\tree^{AB}_{\sigma^{\rm in}_{i}}$. The initial vertex chosen arbitrarily is the same for both trees.  For both processes, the labels used by a branch are drawn uniformly at random from the set of labels yet to be used (due to the complete graph and urn process). Thus, the probability that a given labelled tree is generated at the respective stopping times of each process is the same. 
\end{proof}

The transient equivalence between the \emph{Urn-Tree} and \emph{Aldous-Broder} algorithms established by the above theorem requires the complete graph to have self-loops. This ensures that every branch generated by \emph{Aldous-Broder} is connected to a uniformly chosen vertex among the already visited vertices. Without self-loops, $X_{\sigma^{\rm in}_{i}}$ must be different from $X_{(\sigma^{\rm in}_{i}-1)}$, an already visited vertex. However, the \emph{Urn-Tree} algorithm generates a uniform spanning tree of the complete graph either with or without self-loops (since self-loops are not part of any spanning tree).

\begin{corollary}
     The random tree $\tree^U_{n-1}$ produced by \textit{Urn-Tree} algorithm on an urn with $n$ balls is a uniform spanning tree of the complete graph with $n$ vertices.
\end{corollary}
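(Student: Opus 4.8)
The plan is to chain the two equivalences already in hand: the transient equivalence of Theorem~\ref{thm:urn aldous equivalence} evaluated at the index $i=n-1$, together with the Aldous-Broder theorem (Theorem~\ref{thm:aldous-broder}) applied to the complete graph with self-loops.

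First I would verify that $\tree^{U}_{n-1}$ really is a spanning tree of $K_n$, i.e., that it has $n$ vertices. The color-assignment process paints all $n$ balls: $b_r$ receives $c_0$ and, as noted in the text, exactly one previously uncolored ball is colored each time a ball is drawn in step $1)$. Hence $\sum_{j\geq 0}|C_j|=n$ with $|C_0|=1$, and writing $M$ for the largest index with $C_M\neq\emptyset$ (so $M\leq n-1$) one gets $|\tree^{U}_{n-1}|=\sum_{j=0}^{n-1}|C_j|=\sum_{j=0}^{M}|C_j|=n$. Since \textbf{Urn-Tree} attaches each path graph to the current tree by a single new edge leading to a fresh vertex, it never closes a cycle, so $\tree^{U}_{n-1}$ is a tree on all $n$ vertices, hence a spanning tree of $K_n$ (whether or not $K_n$ carries self-loops, since a self-loop belongs to no spanning tree).

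Next, on the \textbf{Aldous-Broder} side, by time $\sigma^{\rm in}_{n-1}$ the walk has already covered $\G$: every non-empty branch contributes at least one new vertex, so at most $n-1$ branches occur before the cover time $\eta$, and once $t\geq\eta$ all later stopping times are capped at $\eta$ with empty branches. Consequently $\sigma^{\rm in}_{n-1}=\eta$ and $\tree^{AB}_{\sigma^{\rm in}_{n-1}}$ already contains all $n$ vertices, hence coincides with the spanning tree returned by \textbf{Aldous-Broder}, which by Theorem~\ref{thm:aldous-broder} has law {\rm UST}$(K_n)$ --- self-loops are again immaterial, a self-loop never being a first-entrance edge. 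Applying Theorem~\ref{thm:urn aldous equivalence} with $i=n-1$ gives $\tree^{U}_{n-1}\buildrel d \over = \tree^{AB}_{\sigma^{\rm in}_{n-1}}$, and the claim follows. I do not anticipate a genuine obstacle here: the only points needing care are the bookkeeping that both processes have terminated by index $n-1$ (so that this index really yields the final tree in each case) and the harmless role of self-loops, the substantive work having been done already in Theorem~\ref{thm:urn aldous equivalence}.
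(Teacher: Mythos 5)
Your proposal is correct and follows essentially the same route as the paper: invoke Theorem~\ref{thm:urn aldous equivalence} at $i=n-1$, observe that $\sigma^{\rm in}_{n-1}=\eta$ so the Aldous-Broder tree at that stopping time is the full UST of Theorem~\ref{thm:aldous-broder}, and note that self-loops are irrelevant to the spanning-tree law. The extra bookkeeping you supply (all $n$ balls get colored, the urn tree is acyclic) is a harmless elaboration of what the paper leaves implicit.
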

\begin{proof}
    From Theorem~\ref{thm:urn aldous equivalence}, when $\G$ is a complete graph with self-loops, 
    $\tree^U_{i}\overset{d}{=} \tree^{AB}_{\sigma^{\rm in}_i}$, for every $i\leq n-1$. Then by Theorem~\ref{thm:aldous-broder} and observing that $\sigma^{\rm in}_{n-1}= \eta$, the result follows. If $\G$ is a complete graph without self-loops, the result also follows since UST($G$) is the same when either $G$ has self-loops or not.
\end{proof}

While the \emph{Urn-tree} algorithm is a linear time algorithm for generating USTs for the complete graph, it is clearly not the first. A simple procedure proposed by Aldous generates a UST for the complete graph using $n$ uniform choices on $\{1,\ldots,n\}$ and a random shuffle, thus also requiring time $\Theta(n)$ (see Algorithm 2 in \cite{aldous1990random}). However, the notion of branches in the \emph{Urn-tree} algorithm and its transient equivalence with \textit{Aldous-Broder} is instrumental in establishing the connection with Wilson's algorithm in the next section. 

\section{Wilson's branch distribution}
\label{sec:wilson}


Recall that Wilson's algorithm constructs a path (loop-erased walks) with vertices not yet in the tree prior to connecting the path to the existing tree. This suggests a very natural definition for \textit{branches} and stopping times: Let $\hat\sigma_0 \equiv 0$ and $\tree_{\hat \sigma_0}^{ W}$ the single initial vertex $r$ chosen arbitrarily, and recursively define $ \hat\sigma_{i}:= \inf\big\{t>\hat\sigma_{i-1}: X_{t}\in \tree^{W}_{\hat\sigma_{i-1}}\big\}$, where $\tree^{W}_{\hat\sigma_{i-1}}$ is the tree build by the \emph{Wilson} algorithm up to time $\hat \sigma_{i-1}$ for $i>0$. The $i$-th \emph{branch} corresponds to the loop erasure path $\hat p_i$ of the walk $(X_0^i, \ldots, X_{\hat \sigma_i - 1})$, where $X_0^i$ is a uniform vertex in $V\setminus V(\tree^{W}_{\hat\sigma_{i-1}})$. Note that the edge $\{X_{\hat \sigma_i - 1}, X_{\hat \sigma_i}\}$ does not belong to the branch but is the edge that connects the $i$-th branch to the existing tree $\tree^{W}_{\hat\sigma_{i-1}}$.

\subsection{Wilson's branch distribution  on  complete graphs}
\label{sec:wilson-complete}
While Wilson's algorithm produces branches for any graph, it is challenging to analyse their distribution in the general case. Thus, the complete graph is first considered, and the analysis unveils a strong relationship between \textit{Aldous-Broder} and \textit{Wilson} which will be formally established in Section~\ref{sec:equivalence}.

The first important property of the branches generated on complete graphs is that the labels of the vertices in any branch are simply a random subset of the labels. This is due to the fact that every vertex is structurally identical and also connected to every other vertex. So, the probability that a vertex appears in a specific branch, and in a specific order, does not depend on the vertex. This implies that the distribution of the branches is entirely defined by their lengths.

However, explicitly computing the branch length distribution in \textit{Wilson} algorithm is not trivial due to its loop erasing mechanism. Expressly, if $\hat p_i(t)$ denotes the $i$-th branch at time $t$ before completion, i.e.,  with $t<\hat\sigma_{i}$, the length of $\hat p_i(t)$ may in the next step:
\begin{enumerate}
\item[1)] Stop growing: if $X_{t+1} \in V(\tree^{W}_{\hat\sigma_{i-1}})$, equivalently $\hat \sigma_i = t+1$; 
\item[2)] Increase by one and continue: if $X_{t+1} \notin V(\tree^{W}_{\hat\sigma_{i-1}})$ and $X_{t+1} \notin V(\hat p_i(t))$; 
\item[3)] Decrease to $\ell \in \{1, \dots, |\hat p_{i}(t)|\}$ and continue: if $X_{t+1} \in V(\hat p_i(t))$ and it corresponds to the vertex in the $\ell$-th position of the current loop erased path. Note that if $X_{t+1} = X_{t}$, the walker stepped through a self-loop and thus $|\hat p_{i}(t+1)| = |\hat p_{i}(t)|$ (the path length stayed constant). 
\end{enumerate}

On a complete graph with $n$ vertices and self-loops, for $i > 0$ and every $k \in \{i, \ldots, n-1\}$  we have that $\PP\big( \hat \sigma_i =t+1\mid  |\tree^{W}_{\hat \sigma_{i-1}}|=k \big) = \frac{k}{n}$. Moreover, if we define $\Delta \hat p_i (t):=|\hat p_i(t+1)| - |\hat p_i(t)|$, then for every $h \in \{1, \ldots, n-k-1\}$: 
\begin{align*}
&\PP\big(\Delta \hat p_i (t) = 1 \big| |\hat p_i(t)|=h, |\tree^{W}_{\hat \sigma_{i-1}}|=k \big) = \frac{n-k-h}{n}\;,  
\\
&\PP\big(\Delta \hat p_i (t) = -\ell  \big| |\hat p_i(t)|=h, |\tree^{W}_{\hat \sigma_{i-1}}|=k \big) = \frac{1}{n}\;, && \forall \ell \in \{0, 1, \ldots, h-1\}\;.
\end{align*}


Given the above transition rules for the length of the loop erasure path, the branch length distribution in Wilson's algorithm reduces to the analysis of the absorbing Markov chain illustrated in Figure~\ref{fig:mc_wilson} where the states correspond to $|\hat p_i(t)|$, namely the length of the $i$-th branch at time $t$. The probability $\PP(|\hat p_{i}(\hat \sigma_i)| = h \big | |\tree^{W}_{\hat \sigma_{i-1}}| = k)$ that the $i$-th branch has length $h$, given that the number of vertices in the previously constructed tree is $k$, corresponds to the probability of making a transition to the $stop$ state from state $h$ (i.e., probability of \emph{absorption} from state $h$).
\begin{figure}
\centering
\begin{tikzpicture}[auto,node distance=20mm, thick,main node/.style={circle,draw,minimum size=.6cm,inner sep=0pt]}]
    \node[main node] (1) {$1$};
    \node[main node] (2) [right of=1] {$2$};
    \node[main node] (3) [right of=2] {$3$};
    \node[main node] (4) [right of=3] {$4$};
    \node[minimum height = .6cm, minimum width = 1.3cm] (5) [right of=4] {$\cdots$};
    \node[ellipse,draw, inner sep = 0pt, minimum height = .6cm] (6) [right of=5] {$n-k$};
    
    \node[ellipse,draw, inner sep = 0pt, minimum height = .6cm] (7) [below right = 6em and 1.5em of 3] {$stop$};
    
    \path
    
    (1) edge[->,loop,out=195,in=178,color=red,min distance=5mm,looseness=15] node{} (1)
    (2) edge[->,loop,out=195,in=178,color=red,min distance=5mm,looseness=15] node{} (2)
    (3) edge[->,loop,out=195,in=178,color=red,min distance=5mm,looseness=15] node{} (3)
    (4) edge[->,loop,out=195,in=178,color=red,min distance=5mm,looseness=15] node{} (4)
    (6) edge[->,loop,out=190,in=178,color=red,min distance=5mm,looseness=15] node{} (6)
    
    (1) edge[->,bend left] node{$\frac{n-k-1}{n}$} (2)
    (2) edge[->,bend left] node{$\frac{n-k-2}{n}$} (3)
    (3) edge[->,bend left] node{$\frac{n-k-3}{n}$} (4)
    (4) edge[->,bend left,in=160] node[anchor=center, midway, above]{$\frac{n-k-4}{n}$} (5)
    (5) edge[->,bend left] node[anchor=center, midway, above]{$\frac{1}{n}$} (6)
    
    (4) edge[->,bend left,color=red] node{} (3)
    (4) edge[->,bend left,out=45,color=red] node{} (2)
    (4) edge[->,bend left,out=60,color=red] node{} (1)
    
    (3) edge[->,bend left,color=red] node{} (2)
    (3) edge[->,bend left,out=45,color=red] node{} (1)
    
    (2) edge[->,bend left,color=red] node{} (1)
    
    (6) edge[->,bend left,color=red,out=15] node{} (4)
    (6) edge[->,bend left,color=red,out=20] node{} (3)
    (6) edge[->,bend left,color=red,out=25] node{} (2)
    (6) edge[->,bend left,color=red,out=30] node{} (1)
    
    (1) edge[->,color=blue,out=270,in=165] node{} (7)
    (2) edge[->,color=blue,out=270,in=148] node{} (7)
    (3) edge[->,color=blue,out=270,in=120] node{} (7)
    (4) edge[->,color=blue,out=270,in=60] node{} (7)
    (6) edge[->,color=blue,out=270,in=15] node{} (7);
\end{tikzpicture}
\caption{Absorbing Markov chain representing Wilson's algorithm construction of a branch $\hat p_i$ in a complete graph with $n$ vertices, given that $|\tree^{W}_{\hat\sigma_{i-1}}| = k$. The numbered states represent the branch length $h$, and $stop$ is an absorbing state indicating the end of the  branch construction. The process always starts from $h=1$. The blue, black and red arrows correspond to the events $1), 2)$ and $3)$,
respectively.}
\label{fig:mc_wilson}
\end{figure}
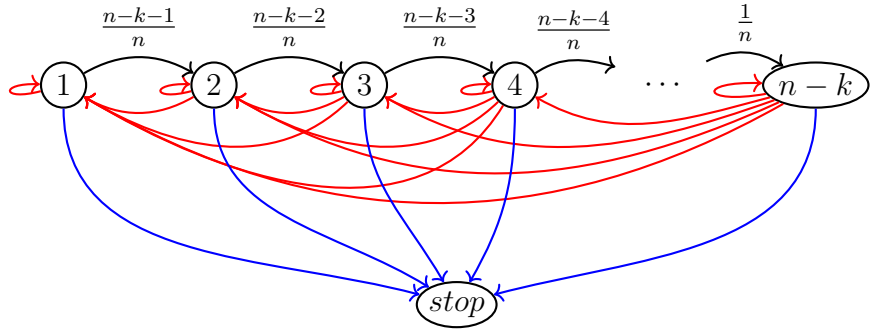 

Consider the ergodic Markov chain in Figure~\ref{fig:modified_mc_wilson} obtained by \textit{lumping} together the $stop$ state and the $h=1$ state. The probability of absorption from state $h$ in the original chain (starting from state $h=1$) corresponds to the probability that, in the ergodic chain (starting from $h=1$), the blue edge incident to state $h$ is the first blue edge traversed. This latter probability in the ergodic chain matches exactly its stationary distribution. Intuitively, since the ergodic chain regenerates every time a blue edge is traversed, the probability that a specific blue edge is the first one traversed corresponds to the long term relative frequency of times that it is traversed when compared to the other blue edges. This relative frequency, in turn, is proportional to the long term relative frequency of times that the chain visits state $h$. Since the probability of crossing a blue edge given that the chain is in a specific state does not depend on the state (the transition probability associated to every blue edge is the same), the proportionality constant is the same for every state. Thus, the  long term relative frequency of times that the blue edge incident to state $h$ is traversed is the same as the stationary distribution of $h$ in the ergodic chain.
\begin{figure}[H]
\centering
\begin{tikzpicture}[auto,node distance=20mm, thick,main node/.style={circle,draw,minimum size=.6cm,inner sep=0pt]}]
    \node[main node] (1) {$1$};
    \node[main node] (2) [right of=1] {$2$};
    \node[main node] (3) [right of=2] {$3$};
    \node[main node] (4) [right of=3] {$4$};
    \node[minimum height = .6cm, minimum width = 1.3cm] (5) [right of=4] {$\cdots$};
    \node[ellipse,draw, inner sep = 0pt, minimum height = .6cm] (6) [right of=5] {$n-k$};
    
    \path
    
    (1) edge[->,loop,out=195,in=178,color=red,min distance=5mm,looseness=15] node{} (1)
    (2) edge[->,loop,out=195,in=178,color=red,min distance=5mm,looseness=15] node{} (2)
    (3) edge[->,loop,out=195,in=178,color=red,min distance=5mm,looseness=15] node{} (3)
    (4) edge[->,loop,out=195,in=178,color=red,min distance=5mm,looseness=15] node{} (4)
    (6) edge[->,loop,out=190,in=178,color=red,min distance=5mm,looseness=15] node{} (6)
    
    (1) edge[->,bend left] node{$\frac{n-k-1}{n}$} (2)
    (2) edge[->,bend left] node{$\frac{n-k-2}{n}$} (3)
    (3) edge[->,bend left] node{$\frac{n-k-3}{n}$} (4)
    (4) edge[->,bend left,in=160] node[anchor=center, midway, above]{$\frac{n-k-4}{n}$} (5)
    (5) edge[->,bend left] node[anchor=center, midway, above]{$\frac{1}{n}$} (6)
    
    (6) edge[->,bend left,color=red,out=15,looseness=.9] node{} (4)
    (6) edge[->,bend left,color=red,out=20,looseness=.9] node{} (3)
    (6) edge[->,bend left,color=red,out=25,looseness=.9] node{} (2)
    (6) edge[->,bend left,color=red,out=30,looseness=.9] node{} (1)
    
    (4) edge[->,bend left,color=red,looseness=.9] node{} (3)
    (4) edge[->,bend left,out=45,color=red,looseness=.9] node{} (2)
    (4) edge[->,bend left,out=60,color=red,looseness=.9] node{} (1)
    
    (3) edge[->,bend left,color=red,looseness=.9] node{} (2)
    (3) edge[->,bend left,out=45,color=red,looseness=.9] node{} (1)
    
    (2) edge[->,bend left,color=red,looseness=.9] node{} (1)
    
    (1) edge[->,loop left,in=211,out=228,looseness=15,color=blue,min distance=5mm] node{} (1)
    (2) edge[->,bend left,in=130,out=45,color=blue] node{} (1)
    (3) edge[->,bend left,in=130,out=60,color=blue] node{} (1)
    (4) edge[->,bend left,in=130,out=75,color=blue] node{} (1)
    (6) edge[->,bend left,in=130,out=35,color=blue] node{} (1);
\end{tikzpicture}
\caption{Ergodic Markov chain obtained by lumping together state $stop$ and state $1$ in the absorbing chain in Figure \ref{fig:mc_wilson}.} 
\label{fig:modified_mc_wilson}
\end{figure}
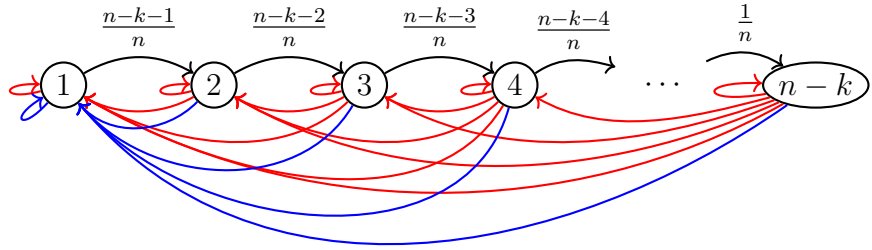

The above procedure establishes a clear strategy to determine the branch length distribution of \emph{Wilson} on complete graphs, which consists in determining the stationary distribution of the Markov chains illustrated in Figure~\ref{fig:modified_mc_wilson}. Note that the Markov chain depends only on $n$ and the number of vertices already in the tree, namely $k$. Nevertheless, explicitly solving the balance equations and finding a closed-form solution for the stationary distribution is often not trivial. However, verifying that a candidate distribution is a stationary distribution for a Markov chain is rather easy. The next lemma shows that the distribution in Lemma~\ref{lem:urn-branch}, i.e., the length of the path attached in the tree in the urn process, is the stationary distribution for the corresponding Markov chain.

\begin{lemma}\label{lem:branch_wilson}
For every $k \in \{1, \ldots, n-1\}$ the stationary distribution of the corresponding Markov chain depicted in Figure~\ref{fig:modified_mc_wilson} is:
\begin{align*}
\pi(h) = \frac{(k+h)(n-k-1)!}{n^{h}(n-k-h)!}\;, && \text{ for $h \in \{1, \ldots, n-k\}$}\;.     
\end{align*}
\end{lemma}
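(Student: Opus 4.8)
The plan is to verify that $\pi$ solves the (global) balance equations of the ergodic Markov chain $M$ in Figure~\ref{fig:modified_mc_wilson}. Since $M$ is finite, irreducible (from any state one reaches state $1$ through a blue edge, and from state $1$ one climbs to every higher state through the black edges) and aperiodic (every state carries a red self-loop of weight $1/n$), it has a \emph{unique} stationary distribution; moreover $\pi$ is already a genuine probability vector by Lemma~\ref{lem:urn-branch}. Hence it suffices to check that the probability flowing into each state $h$ equals $\pi(h)$.

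Enumerating the incoming edges in Figure~\ref{fig:modified_mc_wilson}, state $h$ receives mass from: its red self-loop (weight $1/n$); the black edge from $h-1$, of weight $(n-k-h+1)/n$, when $h\ge2$; a red edge of weight $1/n$ from every state $j>h$; and, only when $h=1$, the blue edges of total weight $k/n$ coming from all states. After multiplying by $n$ and using $\sum_{h}\pi(h)=1$, invariance becomes the system
\[
(n-1)\pi(h)=(n-k-h+1)\,\pi(h-1)+\sum_{j=h+1}^{n-k}\pi(j)\quad(2\le h\le n-k),\qquad (n-1)\pi(1)=\sum_{j=2}^{n-k}\pi(j)+k .
\]
The only non-trivial ingredient is the tail sum $S(h):=\sum_{j=h+1}^{n-k}\pi(j)$, and the idea is to obtain it in closed form first. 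Set $R(h):=\frac{(n-k-1)!}{n^{h}\,(n-k-h-1)!}$, with the convention $1/(-1)!=0$ so that $R(n-k)=0$. A short manipulation using $n-(n-k)=k$ gives $R(h)-R(h+1)=\pi(h+1)$, hence the telescoping identity $S(h)=R(h)-R(n-k)=R(h)=\frac{n-k-h}{h+k}\,\pi(h)$.

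With $S(h)$ in hand, every balance equation collapses to elementary algebra: for $h\ge2$, substituting the formulas for $\pi(h)$, $\pi(h-1)$, $S(h)$ and cancelling the common factor $\frac{(n-k-1)!}{n^{h}(n-k-h)!}$ reduces the identity to $n(k+h-1)+(n-k-h)=(n-1)(k+h)$, which is immediate; the $h=1$ equation reduces similarly to $(n-1)(k+1)=n-k-1+nk$. This shows $\pi$ is invariant for $M$, and by uniqueness it is the stationary distribution claimed. The step I expect to be the real obstacle is recognizing the closed form $R(h)$ — equivalently, spotting the telescoping $R(h)-R(h+1)=\pi(h+1)$; everything else is routine edge-counting in the chain and polynomial algebra. (Equivalently, one can repackage the tail-sum identity as the cut-balance equation across $\{1,\dots,h\}$ versus its complement, which for this ``skip-free to the right'' chain is equivalent to the full balance system; but the computational heart is the same telescoping.)
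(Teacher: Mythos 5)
Your proof is correct and follows essentially the same route as the paper: write down the balance equations of the lumped ergodic chain and verify that the stated $\pi$ satisfies them (your equations, after clearing denominators, coincide with the paper's). The only difference is that you actually carry out the verification — via the telescoping identity $S(h)=R(h)=\frac{n-k-h}{k+h}\pi(h)$ for the tail sums — where the paper simply asserts that ``the claim easily follows.''
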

\begin{proof}
Every state $h$ with $h\neq 1$, has one incoming \textit{black arrow} from the state $h-1$ and one \textit{red arrow} from each state $h+ \ell$. The state $h=1$, in contrast, has $n-k$ incoming \textit{blue arrows} and $n-k-1$ \textit{red arrows}. Therefore,  the balance equations are:
\begin{equation*}
    \pi(h) = \pi(h-1)\frac{n-k-(h-1)}{n-1}+\sum_{\ell=h+1}^{n-k}\pi(\ell)\frac{1}{n-1}\;, \qquad h\neq 1\;,
\end{equation*}

\begin{equation*}
    \pi(h) = \sum_{\ell=2}^{n-k}\pi(\ell)\frac{k+1}{n-1} + \pi(1)\frac{k}{n-1}\;,  \qquad  h=1\;,
\end{equation*}
and the claim easily follows.
\end{proof}

It is easy to show that the transition probabilities for increasing and decreasing the branch size while it is being constructed on a complete graph without self-loops yields a Markov chain that has exactly the same balance equations as the Markov chain depicted in Figure~\ref{fig:modified_mc_wilson}. Thus, the branch size distribution without self-loops is identical to the case with self-loops (for complete graphs). 

\section{Aldous-Broder and Wilson equivalence and a hybrid algorithm}
\label{sec:equivalence}

Lemma~\ref{lem:branch_wilson} showed that branch sizes generated by \emph{Wilson} are identical to the number of balls painted with a given color in the urn process, conditioned on their respective pasts (i.e., number of vertices already in the tree, number of balls already painted). Recall from Theorem~\ref{thm:urn aldous equivalence} that the latter distribution is also identical to the conditional branch length distribution of \textit{Aldous-Broder} when considering a complete graph. Thus, the following interesting result follows immediately:
\equivalence

\begin{proof}
From Lemma~\ref{lem:branch_wilson} and Theorem~\ref{thm:urn aldous equivalence}.
\end{proof}

Note that the step-by-step construction of the UST on complete graphs by \textit{Aldous-Broder} and \textit{Wilson} are fundamentally distinct. However, Theorem~\ref{thm:equivalence} shows that if these two processes are observed at specific stopping times, the ``transient'' (partial) trees built by both algorithms are identically distributed. Interestingly, these stopping times correspond to the time a branch is constructed in either algorithm. Moreover, this equivalence regards also the labels of the vertices and not only the (partial) tree structure. Of course, both algorithms must have the same initialization, in the sense that vertex $r$ must the chosen identically by both algorithms. 

%

\subsection{Hybrid Algorithm}
\label{sec:hybrid}

Theorem~\ref{thm:equivalence} shows that \emph{Aldous-Broder} and \emph{Wilson} are statistically equivalent on the appropriate stopping times when running on complete graphs. This suggests that a UST can be constructed by one algorithm until a certain stopping time and then finished by the other algorithm. This gives rise to a hybrid algorithm that can switch between \emph{Aldous-Broder} and \emph{Wilson} (and back) at the corresponding stopping times. 

In particular, consider the following hybrid algorithm that starts with \emph{Aldous-Broder}, constructs $i$ branches, and then switches to \emph{Wilson} to finish generating the random tree:

\vspace{5pt}\hspace{-.8cm}\textbf{Hybrid}$\bm{(\G = (\V,\E), i)}$\textbf{:}
\begin{itemize}
    \item[0)] Set ${\E_X} \leftarrow \varnothing$ and $X_0 \leftarrow r$, with $r \in \V$ chosen arbitrarily;
    \item Until $t=\sigma^{\rm in}_i$:
        \begin{itemize}
            \item Run a simple random walk $(X_{t})_{t>0}$ on $\G$, starting at $X_{0}$ and for each edge $e = \{X_{t},X_{t+1}\}$ such that $X_{t+1} \neq X_{k}$ for all $k \leq t$,  set ${\E_X} \leftarrow {\E_X} \cup \{e\}$;
        \end{itemize}
    \item Set $\V_X \leftarrow \{X_0,\dots,X_{\sigma^{\rm in}_i}\}$;
    \item Return \textbf{Wilson($\G, \tree_o = (V_X,E_X)$)};
\end{itemize}
Note that \emph{Wilson} receives as a parameter the partial tree constructed by \emph{Aldous-Broder} and starts execution given this partial tree (see algorithm in Section~\ref{sec:introduction}). The next proposition states that the \emph{Hybrid} algorithm returns a uniform spanning tree of the complete graph $\G$. 

\begin{proposition}
    Let $\tree^{AB}_{\sigma^{\rm in}_i}$ be a random tree of the complete graph $G$ with self-loops, constructed by the first $i$ branches of \textit{Aldous-Broder} algorithm. \textit{Wilson} algorithm with initial condition $\tree^{AB}_{\sigma^{\rm in}_i}$ returns a UST($G$).
\end{proposition}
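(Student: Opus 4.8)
\textit{Proof plan.} The plan is to read the result off from Lemma~\ref{lem:second_step}, the transient equivalence of Theorem~\ref{thm:equivalence}, and the (easy) fact that Wilson's algorithm is a time-homogeneous process on the space of partial trees. First I would dispose of the degenerate case: if the parameter $i$ is large enough that $\sigma^{\rm in}_{i}=\eta$, then the first-entrance edges collected by \emph{Aldous-Broder} up to that time already form a spanning tree, \emph{Wilson} with that spanning tree as initial condition leaves it unchanged, and it is distributed as {\rm UST}$(\G)$ by Theorem~\ref{thm:aldous-broder}. So assume henceforth that $\zeta:=\tree^{AB}_{\sigma^{\rm in}_i}$ is a proper (non-spanning) tree of $\G$ — the setting of Lemma~\ref{lem:second_step}.

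The one structural observation I need is that a full run of \emph{Wilson} on $\G$ decomposes at the stopping time $\hat\sigma_i$: at that moment the algorithm's entire state is the current tree $\tree^{W}_{\hat\sigma_i}$, and all subsequent steps (choose a vertex outside the current tree uniformly, run a loop-erased walk until it hits the tree, attach the erasure, repeat) use fresh randomness and act only through $\tree^{W}_{\hat\sigma_i}$. Equivalently, letting $g$ be the deterministic map sending a (partial) tree together with a stream of fresh random walks to the spanning tree returned by \emph{Wilson} started from that tree, the output of a full \emph{Wilson} run equals $g(\tree^{W}_{\hat\sigma_i},\omega)$ with $\omega$ independent of $\tree^{W}_{\hat\sigma_i}$, and this output is {\rm UST}$(\G)$ (by Wilson's theorem, or by Lemma~\ref{lem:second_step} applied to the one-vertex initial condition $\{r\}$).

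By construction the \emph{Hybrid} algorithm outputs exactly $g(\tree^{AB}_{\sigma^{\rm in}_i},\omega')$ with $\omega'$ a fresh stream independent of $\tree^{AB}_{\sigma^{\rm in}_i}$ and the \emph{same} deterministic $g$. Theorem~\ref{thm:equivalence} gives $\tree^{AB}_{\sigma^{\rm in}_i}\buildrel d \over = \tree^{W}_{\hat\sigma_i}$; since in both cases the partial tree is independent of the fresh random-walk stream driving the second phase, the pairs $(\tree^{AB}_{\sigma^{\rm in}_i},\omega')$ and $(\tree^{W}_{\hat\sigma_i},\omega)$ are equal in distribution, hence so are their images under $g$:
\[g\big(\tree^{AB}_{\sigma^{\rm in}_i},\omega'\big)\;\buildrel d \over =\;g\big(\tree^{W}_{\hat\sigma_i},\omega\big)\;=\;{\rm UST}(\G)\;.\]
An alternative, fully explicit route is to condition on $\zeta$: Lemma~\ref{lem:second_step} gives $\PP(g(\zeta,\omega')=\tau_0)=\mathbf{1}\{\zeta\subseteq\tau_0\}/|\{\tau\in\T_G:\tau\supseteq\zeta\}|$, so summing over $\zeta$ expresses $\PP(\text{Hybrid returns }\tau_0)$ and, applying the same expansion to the full \emph{Wilson} run, $1/|\T_G|$, as identical sums once the laws of $\tree^{AB}_{\sigma^{\rm in}_i}$ and $\tree^{W}_{\hat\sigma_i}$ are identified. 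The only point that genuinely needs spelling out is the decomposition of \emph{Wilson} at $\hat\sigma_i$ — its being a time-homogeneous process on partial trees — but this is immediate from the description of the algorithm; everything else is bookkeeping built on Theorem~\ref{thm:equivalence} and Lemma~\ref{lem:second_step}.
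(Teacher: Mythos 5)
Your proof is correct and takes essentially the same route as the paper's: the paper simply observes that since $\tree^{AB}_{\sigma^{\rm in}_i}\buildrel d \over =\tree^{W}_{\hat\sigma_i}$ (Theorem~\ref{thm:equivalence}) and \textbf{Wilson}$\left(G,\tree^{W}_{\hat\sigma_i}\right)$ is just a full run of \emph{Wilson}, the two outputs must agree in distribution and hence be {\rm UST}$(\G)$. You merely make explicit what the paper leaves implicit --- the decomposition of \emph{Wilson} at $\hat\sigma_i$ into a deterministic map of the current tree and an independent fresh stream, plus the degenerate spanning case --- which is a sound elaboration rather than a different argument.
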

\begin{proof}
    Note that \textbf{Wilson$\left(G,\tree^{W}_{\hat{\sigma}_{i}}\right)$} $\buildrel d \over =$ \textbf{Wilson$\left(G,\tree^{AB}_{\sigma^{\rm in}_{i}}\right)$} since, as the inputs are statistically equivalent (from Theorem \ref{thm:equivalence}), it must also be the case for the outputs. Moreover, \textbf{Wilson$\left(G,\tree^{W}_{\hat{\sigma}_{i}}\right)$} corresponds exactly to \textit{Wilson's} algorithm (starting with the first $i$ branches constructed by the same algorithm). Therefore, the output of \textbf{Wilson$\left(G,\tree^{AB}_{\sigma^{\rm in}_{i}}\right)$} is also equivalent to the one of \textit{Wilson's} algorithm, a UST$(G)$.
\end{proof}

The proposed hybrid algorithm has a particularly interesting running time complexity. Note that the most time consuming phase of \textit{Aldous-Broder} and \textit{Wilson} are very similar: at the end, the random walk in \textit{Aldous-Broder} must find the last remaining vertex to be added to the tree, and at the beginning, the random walk in \textit{Wilson} must find the single anchor vertex to start the tree. On a complete graph with $n$ vertices, it is easy to conclude that both these times are geometrically distributed, with success probability $1/n$. In contrast, at the beginning the random walk of \textit{Aldous-Broder} will more quickly add vertices to tree (due to loop erasure of \textit{Wilson}), while at the end \textit{Wilson} will do so more quickly (due to random walk revisits in \textit{Aldous-Broder}). Thus, the Hybrid algorithm has the potential do reduce the overall running time since it avoids the most time consuming phases of \textit{Aldous-Broder} and \textit{Wilson} and leverages their more effective phases. In what follows, this intuition is formalized. 

The following lemma is due to Wilson~\cite{wilson1996generating} and characterizes the running time of Wilson's algorithm in terms of the number steps taken by the random walk:
\begin{lemma}[Wilson~\cite{wilson1996generating}]
\label{lem:wilson time complexity}
Let $\omega(G)$ be the \textit{mean hitting time} of $G$. The expected number of steps taken by the random walk in \textbf{Wilson($\G$, $\tree_o = \varnothing)$} is $2\omega(G)$.
\end{lemma}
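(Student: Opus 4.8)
The plan is to follow Wilson's original argument, whose core is the \emph{cycle-popping} (``stacks'') representation of the algorithm. Attach to every vertex $v\neq r$ an infinite stack of i.i.d.\ uniformly random neighbours of $v$. Running \textbf{Wilson}$(G,\varnothing)$ is then equivalent to repeatedly reading the top entries of the stacks, popping off any cycle formed by the current top arrows, and halting once the top arrows form a spanning tree rooted at $r$; this is precisely Wilson's correctness argument. What matters here is that one step of the random walk consumes exactly one stack entry, so the total number of steps equals $(n-1)$ --- the entries that survive in the final tree, one per non-root vertex --- plus the total number of entries contained in popped cycles. Since Wilson proved the multiset of popped cycles does not depend on the order in which cycles are popped, this total does not depend on how the vertices are scheduled, and writing $m_v$ for the number of entries of stack $v$ ever consumed we obtain $\mathbb{E}[\#\mathrm{steps}]=\sum_{v\neq r}\mathbb{E}[m_v]$.

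The crux is to identify $\mathbb{E}[m_v]$ with a Green's function. I would argue that the entries of stack $v$ consumed over the run are in bijection with the successive visits to $v$ made by a random walk started at $v$ and killed upon reaching $r$: each popped cycle through $v$ corresponds to one excursion from $v$ back to $v$ avoiding $r$, while the surviving entry of stack $v$ corresponds to the final departure of the walk from $v$ before it hits $r$. This gives $\mathbb{E}[m_v]=G_r(v,v):=\mathbb{E}_v\!\left[\#\{t\ge 0:\ X_t=v,\ t<h_r\}\right]$. Applying the standard identity $G_r(v,v)=1/\mathbb{P}_v(h_r<h_v^{+})=\pi(v)\bigl(\mathbb{E}_v(h_r)+\mathbb{E}_r(h_v)\bigr)$ (the killed-walk Green's function equals $\pi(v)$ times the commute time of $v$ and $r$), and noting the $v=r$ term vanishes,
\[
\mathbb{E}[\#\mathrm{steps}\mid\text{root }r]=\sum_{v}\pi(v)\bigl(\mathbb{E}_v(h_r)+\mathbb{E}_r(h_v)\bigr)=\mathbb{E}_\pi(h_r)+\sum_v\pi(v)\,\mathbb{E}_r(h_v).
\]

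To conclude I would invoke the random-target lemma: $\sum_v\pi(v)\mathbb{E}_r(h_v)$ is independent of $r$ and equals the mean hitting time $\omega(G)=\sum_{u,v}\pi(u)\pi(v)\mathbb{E}_u(h_v)$. Averaging the display over a stationary choice of the root (the normalisation of ``mean hitting time'' that makes the identity exact; for the vertex-transitive graphs used later in the paper one has $\mathbb{E}_\pi(h_r)=\omega(G)$ for every $r$, so no averaging is needed) yields $\mathbb{E}[\#\mathrm{steps}]=\omega(G)+\omega(G)=2\omega(G)$. I expect the middle paragraph to be the real obstacle: making rigorous the claim that the entries of stack $v$ consumed by cycle-popping are distributed as the excursion structure at $v$ of a walk killed at $r$ --- this is the substantive content of Wilson's running-time theorem, whereas the surrounding bookkeeping (the $(n-1)+\sum|C|$ count, and the Green's-function and random-target identities) is standard.
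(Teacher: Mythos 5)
The paper does not prove this lemma --- it is imported verbatim from Wilson's original article, with only the citation as justification --- so there is no in-paper argument to compare against. Your reconstruction of Wilson's cycle-popping proof is the correct one and matches the cited source: the step count equals the number of consumed stack entries, order-independence of the popped-cycle multiset makes $m_v$ a deterministic function of the stacks, scheduling $v$ first identifies $\mathbb{E}[m_v]$ with the Green's function $G_r(v,v)=\pi(v)\bigl(\mathbb{E}_v(h_r)+\mathbb{E}_r(h_v)\bigr)$, and the random-target lemma finishes the sum. You are also right to flag the one imprecision in the lemma as stated: for a fixed root $r$ the exact answer is $\mathbb{E}_\pi(h_r)+\omega(G)$, which equals $2\omega(G)$ only after averaging the root over $\pi$ or on transitive graphs; since the paper applies the lemma only to complete graphs and hypercubes, this distinction is harmless there, but your proof is the honest version of the statement.
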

On a complete graph on $n$ vertices, the \textit{mean hitting time} is $n$. Thus, the random walk takes on average $2n$ steps to return a spanning tree when running \emph{Wilson}. For \textit{Aldous-Broder} on a complete graph, it is well known that the expected number of random walk steps required to return a spanning tree is $nH_n$, where $H_n$ denotes the $n$-th harmonic number (this comes from the equivalence between the cover time of a random walk on a complete graph and the \textit{coupon collector's problem}).

The following proposition establishes the advantages of the Hybrid algorithm. Indeed, it avoids the long phases of both algorithms and exploits their good phases, creating a synergy to efficiently generate a UST. In fact, the Hybrid algorithm is more efficient than either of the algorithms alone. 
\begin{proposition}
    \label{thm: hybrid time complexity}
    The expected number of random walk steps made by \textit{Hybrid($G,1$)} is 
    $n + \Theta(\sqrt{n})$.
\end{proposition}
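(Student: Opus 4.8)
The plan is to split a run of \textbf{Hybrid}$(\G,1)$ into its \emph{Aldous-Broder} phase, which takes exactly $\sigma^{\rm in}_1$ random-walk steps, and its \emph{Wilson} phase, whose number of steps we call $W$; the total is $\sigma^{\rm in}_1+W$, so it suffices to show $\mathbb{E}[\sigma^{\rm in}_1]=\Theta(\sqrt n)$ and $\mathbb{E}[W]=n$, which add to $n+\Theta(\sqrt n)$.

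For the \emph{Aldous-Broder} phase, write $\sigma^{\rm in}_1=\sigma^{\rm out}_1+(\sigma^{\rm in}_1-\sigma^{\rm out}_1)$. Before $\sigma^{\rm out}_1$ only $r$ has been visited, and on the complete graph with self-loops the walk leaves $r$ at each step with probability $(n-1)/n$, so $\sigma^{\rm out}_1$ is geometric with $\mathbb{E}[\sigma^{\rm out}_1]=n/(n-1)=1+O(1/n)$. The remaining quantity $L_1:=\sigma^{\rm in}_1-\sigma^{\rm out}_1$ is the length of the first branch, which by the equivalence with the urn process (Lemma~\ref{lem:urn-branch} with $i=1$, $k=1$) has tail $\PP(L_1\ge j)=\prod_{i=2}^{j}\bigl(1-\tfrac in\bigr)$ for $1\le j\le n-1$. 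A routine birthday-problem estimate finishes this part: from $1-x\le e^{-x}$ one gets $\PP(L_1\ge j)\le e^{-j^2/(4n)}$ for $j\ge 2$, hence $\mathbb{E}[L_1]=\sum_{j\ge 1}\PP(L_1\ge j)=O(\sqrt n)$; and from $1-x\ge e^{-2x}$ on $[0,\tfrac12]$ one gets $\PP(L_1\ge j)\ge e^{-2}$ for every $j\le\sqrt n$, hence $\mathbb{E}[L_1]=\Omega(\sqrt n)$. Thus $\mathbb{E}[\sigma^{\rm in}_1]=\Theta(\sqrt n)$.

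For the \emph{Wilson} phase the key input is Theorem~\ref{thm:equivalence}: taking $i=1$, the partial tree $\tree^{AB}_{\sigma^{\rm in}_1}$ handed to \emph{Wilson} has the same law as $\tree^{W}_{\hat\sigma_1}$, the tree produced after \emph{Wilson}'s first branch. Because \emph{Wilson}'s algorithm uses only its current partial tree to decide the future, running \textbf{Wilson}$(\G,\Ssub)$ has an expected step count $\phi(\Ssub)$ that is a function of $\Ssub$ alone, and running \emph{Wilson} from $\tree^{W}_{\hat\sigma_1}$ is exactly ``continuing'' a full run of \textbf{Wilson}$(\G,\varnothing)$ past its first branch. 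Conditioning on $\tree^{W}_{\hat\sigma_1}$ and using the tower property therefore gives $\mathbb{E}[W]=\mathbb{E}\bigl[\phi(\tree^{AB}_{\sigma^{\rm in}_1})\bigr]=\mathbb{E}\bigl[\phi(\tree^{W}_{\hat\sigma_1})\bigr]=\mathbb{E}[T_W]-\mathbb{E}[\hat\sigma_1]$, where $T_W$ is the total number of steps of \textbf{Wilson}$(\G,\varnothing)$ and $\hat\sigma_1$ the number of steps of its first branch. By Lemma~\ref{lem:wilson time complexity} and the fact that the mean hitting time of the complete graph with self-loops on $n$ vertices equals $n$, $\mathbb{E}[T_W]=2n$; and $\hat\sigma_1$, being the hitting time of $r$ by a walk started at a uniform non-root vertex, is geometric with parameter $1/n$, so $\mathbb{E}[\hat\sigma_1]=n$. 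Hence $\mathbb{E}[W]=2n-n=n$, and the total is $n+\Theta(\sqrt n)$.

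The first-phase estimate is a standard coupon-collector/birthday computation and is not where the content lies. The step to get right is the second one: one must combine Theorem~\ref{thm:equivalence} with the memorylessness of \emph{Wilson}, and in particular make sure that ``number of random-walk steps'' means the same thing in \textbf{Wilson}$(\G,\tree^{AB}_{\sigma^{\rm in}_1})$ as in ``a full run of \textbf{Wilson}$(\G,\varnothing)$ minus its first branch'', so that the subtraction $\mathbb{E}[T_W]-\mathbb{E}[\hat\sigma_1]$ is legitimate. An alternative, messier route that avoids Theorem~\ref{thm:equivalence} would be to compute $\phi(\Ssub)$ directly as a function of $|\Ssub|$ via a mean-hitting-time (cycle-popping) argument.
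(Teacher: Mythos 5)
Your proof is correct and follows essentially the same route as the paper: the Aldous--Broder phase is bounded by the birthday-problem estimate giving $\Theta(\sqrt{n})$ expected steps for the first branch, and the Wilson phase is charged $2n - n = n$ expected steps by subtracting the first-branch cost from Wilson's total via Lemma~\ref{lem:wilson time complexity} and the equivalence of Theorem~\ref{thm:equivalence}. You spell out the tail bounds and the memorylessness/equidistribution argument that the paper leaves implicit, but the decomposition and the key inputs are identical.
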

\begin{proof}
    The expected number of steps made by \textbf{Hybrid($G,1$)} is the sum of the average number of steps it takes for \textbf{Aldous-Broder($G$)} to construct the first branch and the average number of steps \textbf{Wilson($G, \tree_o$)} takes to complete the tree.
    
    Consider an urn with $n$ labelled balls and the process of drawing balls one at a time uniformly at random with replacement. Note that the time required for \textit{Aldous-Broder} to construct the first branch is identical to the time required to draw the first repeated ball. It is known that this has expected time $\sqrt{\pi n/2} + \Theta(1/\sqrt{n})$ (it is a variation of the birthday problem; see for example Section 1.2.11.3 in \cite{knuth1973art1}).
    
    From Lemma \ref{lem:wilson time complexity}, the expected number of steps it takes for \textit{Wilson} to build a uniform spanning tree of a complete graph $G$ is $2n$. Moreover, on average, $n$ of those steps are made to construct the first branch. Therefore, $n$ additional steps on average are required to finish the tree after the construction of the first branch of \emph{Aldous-Broder}. Hence, the expected number of random walk steps required by \textbf{Hybrid($G,1$)} to build a UST($G$) is $n + \Theta(\sqrt{n})$.
\end{proof}

In order to provide more insight into the running time of these algorithms, consider the average number of steps taken by the random walk to include at least $k$ edges in the spanning tree. Figure~\ref{fig:hybrid_simulation} shows a simulation result directly comparing the three algorithms on a complete graph with $n=1000$ vertices (results shown are the average over ten thousand tree generations for each algorithm). As expected, \textit{Aldous-Broder} is efficient in the beginning when almost every step brings a new edge to the tree, but requires a long time in the end, to finish the tree. In contrast, \textit{Wilson} is linear on $n$ but it requires a long time include the first edges in the tree (exactly 1000 steps, on average). The Hybrid algorithm starts efficiently, following along \textit{Aldous-Broder}, and remains efficient , switching to \textit{Wilson} after the first branch is created. Note that \textit{Hybrid} is approximately two times faster than \textit{Wilson}, taking roughly $1000$ steps to build the tree.

\begin{figure}
\centering
\includegraphics[width=.6\textwidth]{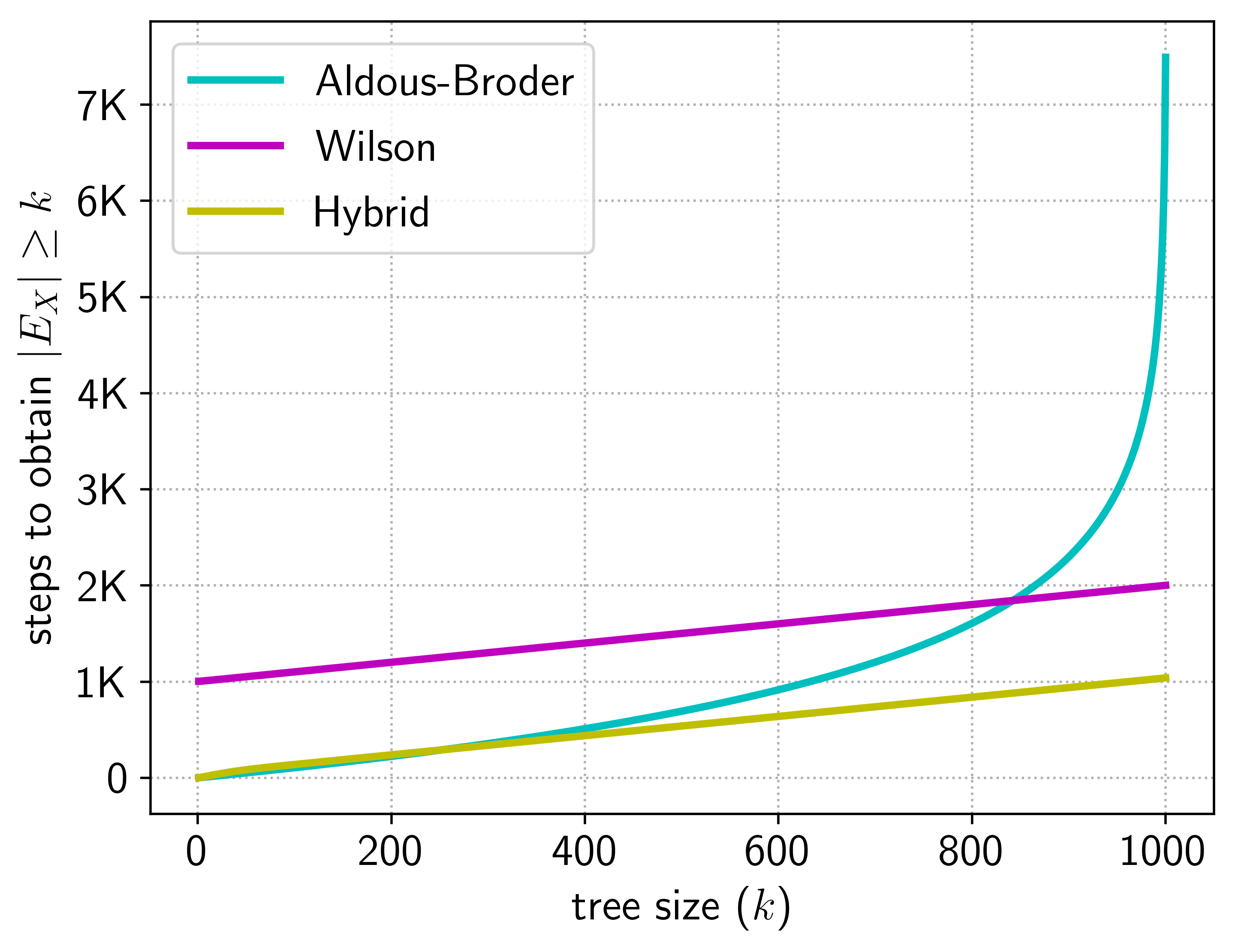}
\caption{Average running time measured in number of random walk steps to generate a UST. Each curve shows the average number of random walk steps required by each algorithm to include at least $k$ edges in the tree being constructed ($n=10^3$; average over $10^4$ independent tree generations).}
\label{fig:hybrid_simulation}
\end{figure}

While the proposed hybrid algorithm first runs \textit{Aldous-Broder} and then switches to \textit{Wilson}, it is perfectly possible to have a reverse hybrid algorithm, where \textit{Wilson} runs and constructs a few branches and then \textit{Aldous-Broder} finishes the job. In this case, an uniform random vertex within the built tree would have to be chosen to start the random walk after switching to \textit{Aldous-Broder} at an appropriate stopping time. In fact, it is also possible to go back and forth between the algorithms, always switching at the appropriate stopping time. However, from the perspective of the running time, the proposed hybrid is the most efficient. 

\section{A two-stage framework for generating uniform spanning trees}
\label{sec:framework}

The previous section presented a two-stage approach to construct uniform spanning trees for complete graphs: (i) build $i$ branches of the spanning tree using \textit{Aldous-Broder}; (ii) switch to \textit{Wilson} to finish the tree. The strategy not only maintains the uniformity across the spanning trees, but also takes less time to generate the tree. Can this \textit{Hybrid} algorithm be used to construct uniform spanning trees for any graph? 

While \textit{Hybrid} clearly constructs random spanning trees for arbitrary graphs (and is intuitively more efficient than either \textit{Aldous-Broder} or \textit{Wilson}), there is no guarantee that such trees will be uniformly distributed. Unfortunately, the random trees generated by the first branch of Aldous-Broder and Wilson algorithms are not identically distributed in general. In particular, Theorem~\ref{thm:equivalence} does not hold for arbitrary graphs as the following simple example shows. 

Let $G_0$ be the graph depicted below:
\begin{figure}[H]
\centering
\begin{tikzpicture}[scale=1.4, shorten >=0.5pt,  >=stealth,  semithick
]

\tikzstyle{every state}=[scale=0.1,draw, fill]



\node[state] at (0, 0)   (1)  {};
\node at (0,-0.3)   {$1$};
\node[state] at (0.7, 0)   (2)  {};
\node at (0.7,-0.3)   {$2$};

\node at (0.3,0.4)   {$G_0$};

\node[state] at (1.2, 0.5)   (3)  {};
\node at (1.2,0.8)   {$3$};
\node[state] at (1.2, -0.5)   (4)  {};
\node at (1.2,-0.8)   {$4$};
 \path[-] (1)  edge    node  {} (2)
 (2)  edge    node  {} (3)
 (2)  edge    node  {} (4)
 (4)  edge    node  {} (3)
 ;
\end{tikzpicture}
\end{figure}
Let  $\tree^{AB}_{\sigma_1^{\rm in}}$ be the random sub-tree generated by the first branch of  \emph{Aldous-Broder} algorithm on $G_0$,  starting from a uniformly chosen vertex, and  $\tree^{W}_{\hat{\sigma}_1}$  the random sub-tree generated by the first branch of \emph{Wilson}'s algorithm on $G_0$, then 
\begin{figure}[H]
\centering
\begin{tikzpicture}[scale=1.1, shorten >=0.5pt,  >=stealth,  semithick
]
\tikzstyle{every state}=[scale=0.1,draw, fill]



\node at (-1, 0) {$\PP\Bigg( \tree^{AB}_{\sigma_1^{\rm in}}=\quad$};
\node[state] at (-0.1, -0.1)   (1)  {};
\node at (-0.1,-0.35)   {$1$};
\node[state] at (0.5, -0.1)   (2)  {};
\node at (0.5,-0.35)   {$2$};

\node[state] at (0.9, 0.35)   (3)  {};
\node at (0.9,0.05)   {$3$};
\node at (1.5, 0) {$\quad\Bigg) = \frac{1}{12}$};
 \path[-] (1)  edge    node  {} (2)
 (2)  edge    node  {} (3)
 ;

 
\node at (3.1,0) {while}; 

\begin{scope}[xshift = 6cm]
    \node at (-1, 0) {$\PP\Bigg( \tree^{W}_{\hat{\sigma}_1}=\quad$};
    \node[state] at (-0.1, -0.1)   (1)  {};
    \node at (-0.1,-0.35)   {$1$};
    \node[state] at (0.5, -0.1)   (2)  {};
    \node at (0.5,-0.35)   {$2$};
    
    \node[state] at (0.9, 0.35)   (3)  {};
    \node at (0.9,0.05)   {$3$};
    \node at (1.5, 0) {$\quad\Bigg) = \frac{1}{9}\;.$};
    \path[-] (1)  edge    node  {} (2)
    (2)  edge    node  {} (3)
    ;
\end{scope}
\end{tikzpicture}
\end{figure}
Note that if we generate the first branch using \textit{Wilson}, i.e., $\tree^{W}_{\hat{\sigma}_1}$, and then run \textbf{Wilson($G, \tree^{W}_{\hat{\sigma}_1}$)} the final tree will be an uniform spanning tree of $G$, as expected. However, if we generate the first branch of \textit{Aldous-Broder}, i.e., $\tree^{AB}_{\sigma
^{\rm in}_1}$, and then run \textbf{Wilson($G, \tree^{AB}_{\sigma^{\rm}_1}$)}, the final spanning tree is not necessarily uniform when an arbitrary graph is considered, as the above example shows. Even so, can this idea of a two-stage procedure be saved? In what follows, this question is addressed and a promising answer is provided.  

Given a connected graph $G$, let $\Ssub_\G$ denote the set of all possible sub-trees (not necessarily spanning) of $G$. We shall denote by $\subtree$ an element of $\Ssub_\G$, while $\spatree$ shall denote an element of $\T_\G$, i.e., a spanning tree of $\G$. With a slight abuse of notation, we shall write $\subtree \subseteq \spatree$ to denote that $\subtree$ is a sub-tree of $\spatree$. Given  a  sub-tree $\subtree \in \Ssub_\G$, we denote by $\T_{\G}(\subtree)=\{\spatree \in \T_G: \spatree \supseteq  \subtree\}$, the set of all spanning trees of $G$ which contain the sub-tree $\subtree$, and by $|\T_{\G}(\subtree)|$ its cardinality. 

Let $Y$ be a random variable taking values in $\Ssub_G$, whose distribution satisfies
the following condition: for all $\spatree,\spatree' \in \T_\G$
\begin{align}\label{eq:stage_one}
    \sum_{\substack{\subtree  \in \Ssub_\G\\ \T_\G(\subtree) \ni \spatree}}\frac{1}{|\T_\G(\subtree)|}\PP(Y=\subtree)= \sum_{\substack{\subtree  \in \Ssub_\G\\ \T_\G(\subtree) \ni \spatree'}}\frac{1}{|\T_\G(\subtree)|}\PP(Y=\subtree)\;, 
\end{align}
Intuitively, the above condition says that $Y$ cannot differentiate between $\spatree$ and $\spatree'$ when generating sub-trees. The condition is subtle because different sub-trees $\subtree$ will induce different sets of spanning trees $\T_\G(\subtree)$ that have different sizes. Note that it is not sufficient for every tree to contain $Y$ with the same probability. The condition also imposes a restriction on the sizes $|\T_\G(\subtree)|$ of the induced tree sets. Thus, $Y$ must not introduce bias with respect to $\spatree$ on the weighted average across all sub-trees of $\spatree$. Despite being apparently very restricted, we will show below that the condition is met, for example, by the sub-trees created by Wilson's algorithm in the stopping times that define the branches.

The following proposition provides a two-stage procedure to generate a uniform spanning tree of an arbitrary graph $G$.
\begin{proposition}\label{prop:twostage-general}
The following two-stage ($\twostage$) procedure returns a uniform spanning tree of a connected graph $\G$. 
\begin{itemize}
    \item[1.] Draw a random variable $Y$ taking values in $\Ssub_\G$ whose distribution satisfies Equation~\eqref{eq:stage_one};
    \item[2.] Draw a spanning tree of $G$ from the set $\T_\G (Y)$ uniformly.
\end{itemize}
\end{proposition}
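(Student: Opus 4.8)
The plan is to compute, for an arbitrary fixed spanning tree $\spatree_0 \in \T_\G$, the probability that the two-stage procedure outputs $\spatree_0$, and show it equals $1/|\T_\G|$. By the law of total probability, conditioning on the sub-tree $\subtree$ drawn in Stage~1,
\[
\PP(\twostage \text{ outputs } \spatree_0) = \sum_{\subtree \in \Ssub_\G} \PP(Y = \subtree)\, \PP(\text{Stage 2 outputs } \spatree_0 \mid Y = \subtree).
\]
In Stage~2 a spanning tree is drawn uniformly from $\T_\G(\subtree)$, so the conditional probability is $\frac{1}{|\T_\G(\subtree)|}$ if $\spatree_0 \in \T_\G(\subtree)$ (equivalently $\subtree \subseteq \spatree_0$) and $0$ otherwise. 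Hence the sum collapses exactly to the left-hand side of Equation~\eqref{eq:stage_one} evaluated at $\spatree = \spatree_0$:
\[
\PP(\twostage \text{ outputs } \spatree_0) = \sum_{\substack{\subtree \in \Ssub_\G \\ \T_\G(\subtree) \ni \spatree_0}} \frac{1}{|\T_\G(\subtree)|}\,\PP(Y = \subtree).
\]

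Next I would exploit the defining property~\eqref{eq:stage_one}: this quantity does not depend on which spanning tree $\spatree_0$ we chose — call the common value $c$. To pin down $c$, sum over all $\spatree_0 \in \T_\G$. On one hand the sum is $|\T_\G| \cdot c$. On the other hand, swapping the order of summation,
\[
\sum_{\spatree_0 \in \T_\G} \sum_{\substack{\subtree \in \Ssub_\G \\ \subtree \subseteq \spatree_0}} \frac{\PP(Y = \subtree)}{|\T_\G(\subtree)|} = \sum_{\subtree \in \Ssub_\G} \frac{\PP(Y = \subtree)}{|\T_\G(\subtree)|} \sum_{\substack{\spatree_0 \in \T_\G \\ \spatree_0 \supseteq \subtree}} 1 = \sum_{\subtree \in \Ssub_\G} \frac{\PP(Y = \subtree)}{|\T_\G(\subtree)|}\,|\T_\G(\subtree)| = \sum_{\subtree \in \Ssub_\G} \PP(Y = \subtree) = 1,
\]
where the inner sum counts precisely $|\T_\G(\subtree)|$ spanning trees. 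Therefore $|\T_\G|\cdot c = 1$, i.e. $c = 1/|\T_\G|$, which shows the output distribution is uniform on $\T_\G$.

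There is essentially no hard obstacle here — the argument is a short double-counting computation once the reduction to~\eqref{eq:stage_one} is in place. The only points requiring minor care are: (i) every sub-tree $\subtree$ with $\PP(Y=\subtree)>0$ satisfies $|\T_\G(\subtree)| \geq 1$ (it is contained in at least one spanning tree since $\G$ is connected), so all the fractions are well-defined; and (ii) the interchange of the two finite sums is justified simply because $\Ssub_\G$ and $\T_\G$ are finite sets. I would also remark that the procedure is well-posed — Stage~2 is always executable because $\T_\G(Y)$ is nonempty whenever $Y$ lands on a genuine sub-tree of $\G$.
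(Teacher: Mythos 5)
Your proposal is correct and follows essentially the same route as the paper: condition on the outcome of Stage~1, identify the resulting sum as the left-hand side of Equation~\eqref{eq:stage_one}, and conclude that the output probability is the same for every spanning tree. The only cosmetic difference is how the common value is normalized --- the paper simply notes that the procedure always returns an element of $\T_\G$ so equal probabilities must each be $1/|\T_\G|$, whereas you pin down the constant by an explicit interchange of the two finite sums; both are immediate.
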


\begin{proof}
Let $\tree$ denote the random spanning tree of $\G$ returned by the two-stage procedure (note that the two-stage procedure always return an element of $\T_\G$). 
Let $\PP_{\twostage}(T=\spatree)$ denote the probability that the two-stage procedure returns $\spatree \in \T_\G$. 
To show that  $\tree$ is uniformly distributed on $\T_\G$, it suffices to show that $\PP_{\twostage}(T=\spatree)= \PP_{\twostage}(T=\spatree')$, for every $\spatree, \spatree' \in \T_\G$.  Given $\spatree \in \T_\G$, it holds that
\begin{align*}
\PP_{\twostage}(T=\spatree)&= \PP_{\twostage}\left(T=\spatree, \bigcup_{\substack{\subtree  \in \Ssub_\G\\ \T_\G(\subtree) \ni \spatree}}\{Y=\subtree\} \right)
\\
&= \sum_{\substack{\subtree  \in \Ssub_\G\\ \T_\G(\subtree) \ni \spatree}}\PP_{2.}(T=\spatree|Y=\subtree)\PP_{ 1.}(Y=\subtree)=  \sum_{\substack{\subtree  \in \Ssub_\G\\ \T_\G(\subtree) \ni \spatree}}\frac{1}{|\T_\G(\subtree)|}\PP_{1.}(Y=\subtree) \;,
\end{align*}
and the claim  follows form Equation~\eqref{eq:stage_one}.\hfill
\end{proof}

While Proposition~\ref{prop:twostage-general} provides a two-stage procedure to generate uniform spanning tree of any graph $\G$, it is not clear whether this procedure can give rise to an efficient algorithm. In particular, both the first and second stages must be implemented efficiently.

\medskip 
\underline{\sc Second-Stage:}
Interestingly, \emph{Wilson} is an efficient algorithm that can be used to implement the second stage in Proposition~\ref{prop:twostage-general}. In particular, given a connected graph $\G$,  and $\subtree\in \Ssub_\G$ a sub-tree (not spanning) of ${\G}$, \textbf{Wilson($G, \subtree$)} returns a spanning tree of $\G$ in the set $\{\spatree \in \T_{\G}: \spatree  \supseteq   \subtree\}$ uniformly. This is formally stated in the next lemma. 

\secondstep

\begin{proof}
The proof is based on the following two steps: 
\begin{enumerate}
\item[a)] We collapse the sub-tree $\subtree$ into a single vertex, denoted by $r$, ignoring its internal edges while keeping all the edges connecting vertices in $\V(\subtree)$ with vertices in  $\V\setminus \V(\subtree)$. Note that this procedure gives rise to a new graph $\G'$ which may have multiple edges incident on $r$. In particular, there exists a set of (external) vertices $\widehat \V \subset \V \setminus \V(\subtree)$ such that the number of edges between $v \in \widehat \V$ and $r$ is the same as the number of neighbors that $v$ has in $V(\subtree)$, denoted by $d'_v$. Thus, $v \in \widehat \V$ has $d'_v$ multiple edges incident on $r$ in $\G'$.  

From the perspective of a simple random walk, the multiple edges incident to a vertex $v \in  \widehat \V$ could be treated as a single weighted edge, with weight equal to $d'_v$; this weighted graph is denoted by ${\G}'_*$. Note that all other edges have weight 1.
\item[b)] We use a variation of Wilson's algorithm for weighted graph that generates a spanning tree of the weighted graph with probability proportional to the product of the edge weights in the corresponding tree~\cite{wilson1996generating}.   
\end{enumerate}

Note that every tree in the set $\{\spatree \in \T_{\G}: \spatree \supset \subtree\}$ corresponds to one and only one spanning tree of $\G'$, and vice versa (assuming each of the multiple edges
can be identified individually). Thus, it is sufficient to show that \textit{Wilson} on $\G'$, with initial condition $r$, returns a spanning tree in $\T_{\G'}$ uniformly.

The set of spanning trees of $\G'$ can be partitioned accordingly to which of the vertices in $\widehat \V$ are connected to $r$, ignoring which specific edge among the multiple edges are in the spanning tree. Specifically,   for $\spatree \in \T_{\G'} $,  if we denote by   $N_r(\spatree)$  the set of  neighbors of $r$ in $\spatree$, we have that 
\[
\T_{\G'} = \bigcup_{I\subseteq \widehat \V} \underset{:= \,S_I}{\underbrace{\left\{ \spatree \in \T_{\G'} : N_r(\spatree)=I \right\}} }
%
\]
Note that $\{S_I: I \subseteq \widehat \V \}$ (the set of spanning trees of $\G'$ in which $r$ is connected to the vertex $I$) is a partition of $\T_{\G'}$, and thus for any spanning tree $\spatree \in \T_{\G'}$  there exists a unique $I \subseteq \widehat \V$ such that $\spatree \in S_I$; specifically $\spatree \in  S_{N_r(\spatree)}$. Hence, if $T$ denotes the random spanning tree generated by Wilson's algorithm on $\G'$, we have that 
$ \PP(\tree=\spatree)= \PP\left(\tree=\spatree | S_{N_r(\spatree)}\right) \PP\left(S_{N_r(\spatree)}\right)$, for any $\spatree \in \T_{\G'}$. 

Now let us observe that $\PP\left(S_{N_r(\spatree)}\right)$ corresponds to the probability that Wilson's algorithm on the weighted graph $\G'_*$ (no more multiple edges) returns a specific  spanning tree in  $\T_{\G'_*}$. By point $b)$,   the probability that  Wilson's algorithm on the weighted graph $\G'_*$ returns $\tilde \spatree \in \T_{\G'_*}$ is equal to  $C\prod_{v\in N_r(\tilde \spatree)}d'_v$, where $C$ is a normalizing constant; in fact, the edges $\{r,v\}$ of the tree, with $v \in N_r(\tilde \spatree)$ have weights $d'_v$, whereas all other edges have unitary weights. 
As far as $\PP\left(\tree=\spatree | S_{N_r(\spatree)}\right)$ is concerned, since the random walk is simple, for any $v \in N_r(\spatree)$,  the walker  chooses one (and only one) of the possible multiple edges incident on $r$ uniformly, and independently across the different   $v \in N_r(\spatree)$. Thus, $\PP\left(\tree=\spatree| S_{N_r(\spatree)}\right)= 1/\left(\prod_{v\in N_r(\spatree)}d'_v\right)
$.  
Hence, for any $\spatree \in \T_{\G'}$ the probability  $\PP(\tree=\spatree)$ does not depend on $\spatree$, and thus is uniform on $\T_{\G'}$. \hfill
\end{proof}

\medskip  
\underline{\sc First-Stage:} The main concern with the first stage is constructing and sampling a random variable whose distribution satisfies Equation~\eqref{eq:stage_one}. Below we list some examples.

\medskip 
{\bf Example 1 (Wilson's branches):} Let $\G$ be an arbitrary connected graph $\G$ and let $\tree^{W}_{\hat{\sigma}_i}$ be the random sub-tree of $\G$ generated by the first $i$ branches of \emph{Wilson}'s algorithm. Then  $\tree^{W}_{\hat{\sigma}_i}$ satisfies Equation~\eqref{eq:stage_one}. To see the latter, note that if we run the two-stage procedure, when the first stage consists in drawing $\tree^{W}_{\hat{\sigma}_i}$ and the second stage in running Wilson's algorithm with initial condition  $\tree^{W}_{\hat{\sigma}_i}$, the resulting algorithm is equivalent to  Wilson's (classical) algorithm on $\G$  starting from a single vertex, whose final outcome is indeed uniform on $\T_\G$. Thus,
if we denote by $\tree$ the random tree generated by the two-stage procedure when the first stage is Wilson's first $i$ branches and the second is Wilson's algorithm with the obtained initial condition, we have that  for every $\spatree, \spatree' \in \T_\G$, it holds that $\PP_{\twostage }(\tree=\spatree)=\PP_{\twostage}(\tree=\spatree')$. Moreover 
\begin{align*}
\PP_{\twostage}(T=\spatree)= \sum_{\substack{\subtree  \in \Ssub_\G\\ \T_\G(\subtree) \ni \spatree}}\PP_{2.}(T=\spatree|\tree^{W}_{\hat{\sigma}_i}=\subtree)\PP(\tree^{W}_{\hat{\sigma}_i}=\subtree)=  \sum_{\substack{\subtree  \in \Ssub_\G\\ \T_\G(\subtree) \ni \spatree}}\frac{1}{|\T_\G(\subtree)|}\PP(\tree^{W}_{\hat{\sigma}_i}=\subtree) \;,
\end{align*}
and thus, $\tree^{W}_{\hat{\sigma}_i}$ satisfies Equation~\eqref{eq:stage_one}.

\medskip 
{\bf Example 2 (Aldous-Broder  branches on complete graphs):} Let $\G$ be a complete graph  an  let $\tree^{AB}_{\sigma^{\rm in}_i}$ be the random sub-tree of $\G$ generated by the first $i$ branches of \emph{Aldous-Broder}  algorithm. Then  $\tree^{AB}_{\sigma^{\rm in }_i}$ satisfies Equation~\eqref{eq:stage_one}, for every $i$. The latter follows directly from Example~1, together with Theorem~\ref{thm:equivalence}. 

It is worthwhile mentioning that a similar result will not hold in general graphs. Specifically, if  $\tree^{AB}_{\sigma^{\rm in}_i}$ denotes the random sub-tree  generated by the first $i$ branches of \emph{Aldous-Broder}  algorithm on an arbitrary graph  $\G$, the distribution of  $\tree^{AB}_{\sigma^{\rm in}_i}$ will not, in general,  satisfy Equation~\eqref{eq:stage_one}.  For example,  the sub-tree generated by the first branch of \emph{Aldous-Broder}   algorithm on $G_0$, given at the beginning of this section, (starting from a uniform vertex), does not satisfy Equation~\eqref{eq:stage_one}.

\medskip 
{\bf Example 3 (Uniform edge in edge-transitive  graphs):} Let $\G=(V,E)$ be an edge-transitive graph, an let $Y$ be a random variable uniformly distributed on $E$. Then  $Y$ satisfies Equation~\eqref{eq:stage_one}. As a matter of fact, for every $e \in E$, we have that $\PP(Y=e)=1/|E|$ and moreover, since the graph $\G$ is edge transitive, for every two spanning trees  $\spatree,\spatree' \in \T_\G$ it holds that 
\[
\sum_{\substack{e  \in E\\ \T_\G(e) \ni \spatree}}\frac{1}{|\T_\G(e)|}= \sum_{\substack{e  \in E\\ \T_\G(e) \ni \spatree'}}\frac{1}{|\T_\G(e)|} = \frac{n-1}{|\T_\G(e_o)|}\;,
\]
where $e_o$ is an arbitrary edge of $E$. The result follows since every spanning tree has $(n-1)$ edges, and thus appears in exactly $(n-1)$ terms in the sum. Moreover, since the graph is edge transitive, the number of spanning trees containing an specific edge $e \in E$ is the same considering any other edge $e' \in E$ due to the automorphism between $e $ and $e'$. 

Interestingly, choosing an uniform edge for a graph that is not edge-transitive will not necessarily make the requirements, as the next simple example shows.
Consider the following graph $\G$ and its spanning trees $\spatree_1, \dots,  \spatree_8$:

\begin{figure}[H]
\centering
\begin{tikzpicture}[auto,node distance=1cm,
        thick,main node/.style={circle,fill=blue!20,draw,minimum size=.5cm,inner sep=0pt],scale=.7},scale=.7,baseline=-2cm]

    \node[main node] (1) {$1$};
    \node[main node] (2) [right of=1] {$2$};
    \node[main node] (3) [below of=1] {$3$};
    \node[main node] (4) [right of=3] {$4$};
    \node[above=.3em,font=\large] at (current bounding box.north) {$\G$};

    \path[-]
    (1) edge node {} (2)
        edge node {} (3)
    (2) edge node {} (3)
    (4) edge node {} (2)
        edge node {} (3);
\end{tikzpicture}
\hspace{1cm}
\begin{tikzpicture}[auto,node distance=1cm, thick,main node/.style={circle,fill=yellow!20,draw,minimum size=.5cm,inner sep=0pt,scale=.7]},scale=.7]
    \matrix[column sep=.6cm,row sep=0.45cm]{
    \node[main node] (11) {$1$};
    \node[main node] (12) [right of=11] {$2$};
    \node[main node] (13) [below of=11] {$3$};
    \node[main node] (14) [right of=13] {$4$};
    \node[above=.3em,font=\large] at (current bounding box.north) {$\spatree_1$};

    \path[-]
    (11) edge node {} (12)
        edge node {} (13)
    (12)    edge node {} (14); &
    
    \node[main node] (21) {$1$};
    \node[main node] (22) [right of=21] {$2$};
    \node[main node] (23) [below of=21] {$3$};
    \node[main node] (24) [right of=23] {$4$};
    \node[above=.3em,font=\large] at (current bounding box.north) {$\spatree_2$};

    \path[-]
    (21) edge node {} (22)
        edge node {} (23)
    (23) edge node {} (24); &
    
    \node[main node] (31) {$1$};
    \node[main node] (32) [right of=31] {$2$};
    \node[main node] (33) [below of=31] {$3$};
    \node[main node] (34) [right of=33] {$4$};
    \node[above=.3em,font=\large] at (current bounding box.north) {$\spatree_3$};

    \path[-]
    (32) edge node {} (31)
        edge node {} (33)
        edge node {} (34); &
    
    \node[main node] (41) {$1$};
    \node[main node] (42) [right of=41] {$2$};
    \node[main node] (43) [below of=41] {$3$};
    \node[main node] (44) [right of=43] {$4$};
    \node[above=.3em,font=\large] at (current bounding box.north) {$\spatree_4$};

    \path[-]
    (41) edge node {} (42)
    (42) edge node {} (43)
    (43) edge node {} (44); \\
    
    \node[main node] (51) {$1$};
    \node[main node] (52) [right of=51] {$2$};
    \node[main node] (53) [below of=51] {$3$};
    \node[main node] (54) [right of=53] {$4$};
    \node[above=.3em,font=\large] at (current bounding box.north) {$\spatree_5$};

    \path[-]
    (51) edge node {} (52)
    (52) edge node {} (54)
    (53) edge node {} (54); &
    
    \node[main node] (61) {$1$};
    \node[main node] (62) [right of=61] {$2$};
    \node[main node] (63) [below of=61] {$3$};
    \node[main node] (64) [right of=63] {$4$};
    \node[above=.3em,font=\large] at (current bounding box.north) {$\spatree_6$};

    \path[-]
    (61) edge node {} (63)
    (62) edge node {} (63)
    (64) edge node {} (62); &
    
    \node[main node] (71) {$1$};
    \node[main node] (72) [right of=71] {$2$};
    \node[main node] (73) [below of=71] {$3$};
    \node[main node] (74) [right of=73] {$4$};
    \node[above=.3em,font=\large] at (current bounding box.north) {$\spatree_7$};

    \path[-]
    (73) edge node {} (71)
    (73) edge node {} (72)
    (74) edge node {} (73); &
    
    \node[main node] (81) {$1$};
    \node[main node] (82) [right of=81] {$2$};
    \node[main node] (83) [below of=81] {$3$};
    \node[main node] (84) [right of=83] {$4$};
    \node[above=.3em,font=\large] at (current bounding box.north) {$\spatree_8$};

    \path[-]
    (81) edge node {} (83)
    (83) edge node {} (84)
    (84) edge node {} (82);\\
    };
\end{tikzpicture}
\end{figure}
The cardinality of $\T_\G(e)$ for $e \in \{(1,2), (1,3), (2,4), (3,4) \}$ is five since each of these edges appear in exactly five spanning trees. However, the cardinality of $\T_\G((2,3)))$ is four as edge $(2,3)$ appears in exactly four spanning trees. Thus, choosing an edge uniformly at random and then choosing a spanning tree uniformly at random given the chosen edge will generate a bias towards spanning that have the edge $(2,3)$. In fact, the probability of choosing $\spatree_{i}$ is 12/100 when $i \in \{1,2,5,8\}$ and 13/100 when $i  \in \{3,4,6,7\}$.


\medskip 
{\bf Example 4 (Aldous-Broder first branch on a cycle):} %
Let $\G$ be a cycle graph on $n$ vertices, henceforth denoted by $\mathcal{C}_n$ and $(X_t)_{t\geq 0}$ be a simple random walk on $\mathcal{C}_n$.  Let  $\sigma^{\rm in}_1:=\inf\{t\geq 0: X_t \in \cup_{k=0}^{t-1}\{X_k\}\}$ and denote by $\tree^{AB}_{\sigma^{\rm in}_1}$ the sub-tree generated  by the first-entrance edges up to time $\sigma^{\rm in}_1$. The random variable $\tree^{AB}_{\sigma^{\rm in}_1}$ satisfies Equation~\eqref{eq:stage_one}. To see the latter, let us first observe that if 
$\subtree \in \Ssub_{\mathcal{C}_n}$ is a  sub-tree of $\mathcal{C}_n$, and  $\E(\subtree)$ denotes  the set of edges of $\subtree$, it holds  that 
\[
\PP\left(\tree^{AB}_{\sigma^{\rm in}_1}=\subtree\right)= \frac{1}{n} 2^{-|\E(\subtree)|}\;. 
\]
Moreover, given $\subtree \in \Ssub_{\mathcal{C}_n}$, the set of spanning tree of $\mathcal{C}_n$ containing $\subtree$ satisfies $|\T_{\mathcal{C}_n}(\subtree)|= n - |E(\subtree)|$. Using that all spanning trees of $\mathcal{C}_n$ are isomorphic to each other, for every $\spatree, \spatree' \in \T_{\mathcal{C}_n}$,  we obtain that
\[
\sum_{\substack{\subtree  \in \Ssub_{\mathcal{C}_n}\\ \T_{\mathcal{C}_n}(\subtree) \ni \spatree}}\frac{1}{n-|\E(\subtree)|} \frac{1}{n} 2^{-|\E(\subtree)|}=
\sum_{\substack{\subtree  \in \Ssub_{\mathcal{C}_n}\\ \T_{\mathcal{C}_n}(\subtree) \ni \spatree'}}\frac{1}{n-|\E(\subtree)|} \frac{1}{n} 2^{-|\E(\subtree)|}
\;.
\]

{\bf Example 5 (Sub-tree generated by a simple random walk on a cycle):} %
Let $\G$ be a cycle graph on $n$ vertices, henceforth denoted by $\mathcal{C}_n$.
Given $k\geq 0$, let us consider the random sub-tree $\tree_k^X$  induced by a simple random walk $(X_t)_{t\geq 0}$ on $\mathcal{C}_n$ up to time $k$,  starting from a uniformly chosen vertex of $\mathcal{C}_n$, i.e.,  $\tree_k^X$ is the tree with vertex set $\cup_{t=0}^k \{X_t\}$ and edge set  $\cup_{t=1}^{k} \{X_{t-1}, X_t\}$. Then, if $\eta$ denotes the cover time of $\mathcal{C}_n$,  for every $k\geq 1$ the random variable $T^X_{(k\wedge \eta)-1}$ satisfies Equation~\eqref{eq:stage_one}. To see the latter, let $\subtree \in \Ssub_{\mathcal{C}_n}$ be a  sub-tree of $\mathcal{C}_n$, and denote by $\E(\subtree)$ the set of edges of $\subtree$. Note that, 
$\PP\left(T^X_{(k\wedge \eta)-1}=\subtree\right)=0$, if $k<  |E(\subtree)|$, whereas $\PP\left(T^X_{(k\wedge \eta)-1}=\subtree\right)$ is  a function only of $n, k$ and $|E(\subtree)|$, if $k\geq \E(\subtree)$ (i.e., given $n$ and $k$ sub-trees having the same number of edges will have the same probability). Moreover, given $\subtree \in \Ssub_{\mathcal{C}_n}$, the set of spanning tree of $\mathcal{C}_n$ containing $\subtree$ satisfies $|\T_{\mathcal{C}_n}(\subtree)|= n - |E(\subtree)|$. Using that all spanning trees of $\mathcal{C}_n$ are isomorphic to each other, for every $\spatree, \spatree' \in \T_{\mathcal{C}_n}$,  we obtain that, for any fixed $k$, 
\[
\sum_{\substack{\subtree  \in \Ssub_{\mathcal{C}_n}\\ \T_{\mathcal{C}_n}(\subtree) \ni \spatree}}\frac{1}{n-|\E(\subtree)|} \PP\left(T^X_{(k\wedge \eta)-1}=\subtree\right)=
\sum_{\substack{\subtree  \in \Ssub_{\mathcal{C}_n}\\ \T_{\mathcal{C}_n}(\subtree) \ni \spatree'}}\frac{1}{n-|\E(\subtree)|} \PP\left(T^X_{(k\wedge \eta)-1}=\subtree\right)
\;.
\]

Examples 4 and 5 consider a cycle graph $G$ with $n$ vertices. Uniform spanning trees of $G$ can be very efficiently generated by simply removing from $G$ an edge chosen uniformly at random. Thus, while the two-stage procedure in these two examples does not lead to an efficient algorithm, the goal here is to provide a random variable based on random walks that satisfies Equation~\eqref{eq:stage_one} along with an efficient sampling procedure for this variable. This could shed light on crafting such random variable for a broader class of graphs. 

   




\subsection{Speed up on transitive graphs}
\label{sec:hypercubes}
This section provides a specific application of the two-stage framework, using the result shown in Example 3. In particular, the two-stage framework can generate USTs of transitive graphs (edge transitive) more efficiently than Wilson's algorithm, illustrating its potential.

Recall from Proposition \ref{prop:twostage-general} that the two-stage framework consists of the following two steps: (1) drawing a sub-tree $Y$ of $G$ according to a distribution which satisfies Equation~\ref{eq:stage_one};  (2) drawing a spanning tree uniformly from $\T_G(Y)$. 
Consider the following instance of the two-stage framework applied to edge-transitive graphs: (1) draw an edge $Y$ of the graph uniformly at random; (2) run \textit{Wilson} on the graph with initial condition $Y$. We refer to this algorithm as \textbf{Edge-Wilson}$\bm{(G)}$, where $G$ denotes the edge-transitive graph. Since in edge-transitive graphs choosing an edge uniformly at random satisfies the first stage condition (see, Example 3), and along with Lemma~\ref{lem:second_step}, we have that \textbf{Edge-Wilson}$\bm{(G)}$ returns a UST of $G$ (the edge-transitive assumption is important to assure that \textbf{Edge-Wilson}$\bm{(G)}$ returns a UST of $G$).

To construct the first branch, \textbf{Wilson}$\bm{(G, \varnothing)}$ (i.e., {\em Wilson's} algorithm with a single vertex as initial condition) takes in average a time equal to the mean hitting time of $G$, denoted by $\omega(G)= \sum_{j\in V(G)}\sum_{i\in V(G)}\pi(i)\pi(j) \mathbb{E}_i(h_{j})$, where $\pi$ is the stationary distribution of a symmetric random walk on $G$ and $h_j$ is the hitting time of $j$ (it is well known that $\omega(G)=  \sum_{j\in V(G)}\pi(j) \mathbb{E}_i(h_{j})$ since the rhs does not depend on $i$ \cite{aldous-fill-2014}). 
On the other hand, the time taken by {\em Edge-Wilson} to construct the first branch is equivalent to the time \textbf{Wilson}$\bm{(G,e)}$ (i.e.,  Wilson's algorithm where the initial tree is random edge) takes to construct the first branch, which is $\phi(G):= \sum_{i\in V(G)}\sum_{\{u,v\}\in E(G)}\pi(i)\nu(\{u,v\})\mathbb{E}_i (h_{\{u,v\}})$, where $\pi$ is the stationary distribution of a symmetric random walk on $G$, $\nu$ is the uniform distribution on $E(G)$, and $\mathbb{E}_i (h_{\{u,v\}})$ is the expected hitting time of the edge $\{u,v\}$ when the walker starts from $i$. 

Intuitively, $\omega(G)\geq \phi(G)$ since it is easier for the random walk to hit an edge than a vertex. But how much is gained when constructing the first branch using a random initial edge as opposed to random initial vertex? 
The following lemma gives a closed-form expression for $\omega(G)-\phi(G)$ for an arbitrary graph $G$ and another for when $G$ is transitive.  

\begin{lemma}\label{lem:speed-up}
Let $G$ be a graph and $\{v,u\} \in E(G)$. It holds that
\[
\omega(G)  -  \phi(G) = \sum_{\{u,v\}\in E(G)}\frac{1}{|E(G)|}\mathbb{E}_v(h_{u}) \sum_{i\in V(G)}\pi(i)\mathbb{P}_i(h_{v}<h_{u})\;.
\]
Moreover, if $G$ is transitive we obtain that $\omega(G) - \phi(G)=  \frac{\mathbb{E}_v(h_{u})}{2}$.
\end{lemma}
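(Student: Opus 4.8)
The plan is to express both $\omega(G)$ and $\phi(G)$ as averages over the edge set of $G$, and then reduce the edgewise difference to hitting probabilities by a single application of the strong Markov property. Since we deal with the simple random walk, $\pi(j)=\deg(j)/(2|E(G)|)$. Starting from $\omega(G)=\sum_{i}\pi(i)\sum_{j}\pi(j)\mathbb{E}_i(h_j)$ and using $\sum_{j}\deg(j)\mathbb{E}_i(h_j)=\sum_{\{u,v\}\in E(G)}\big(\mathbb{E}_i(h_u)+\mathbb{E}_i(h_v)\big)$ (each edge contributes its two endpoints), I would first obtain
\[
\omega(G)=\frac{1}{2|E(G)|}\sum_{i\in V(G)}\pi(i)\sum_{\{u,v\}\in E(G)}\big(\mathbb{E}_i(h_u)+\mathbb{E}_i(h_v)\big).
\]
Since by definition $\phi(G)=\frac{1}{|E(G)|}\sum_{i}\pi(i)\sum_{\{u,v\}\in E(G)}\mathbb{E}_i(h_{\{u,v\}})$, subtracting gives
\[
\omega(G)-\phi(G)=\frac{1}{|E(G)|}\sum_{i\in V(G)}\pi(i)\sum_{\{u,v\}\in E(G)}\left(\frac{\mathbb{E}_i(h_u)+\mathbb{E}_i(h_v)}{2}-\mathbb{E}_i(h_{\{u,v\}})\right).
\]

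The heart of the argument is the decomposition obtained by applying the strong Markov property at the time $h_{\{u,v\}}=h_u\wedge h_v$ at which the walk first meets the edge $\{u,v\}$: for every vertex $i$,
\[
\mathbb{E}_i(h_u)=\mathbb{E}_i(h_{\{u,v\}})+\mathbb{P}_i(h_v<h_u)\,\mathbb{E}_v(h_u),
\]
because on $\{h_u<h_v\}$ one has $h_u=h_{\{u,v\}}$, whereas on $\{h_v<h_u\}$ the walk sits at $v$ at time $h_{\{u,v\}}$ and thereafter needs $\mathbb{E}_v(h_u)$ steps on average to reach $u$; there is no tie since $u\neq v$, so $\mathbb{P}_i(h_v<h_u)+\mathbb{P}_i(h_u<h_v)=1$. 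Averaging this identity with its $u\leftrightarrow v$ counterpart yields
\[
\frac{\mathbb{E}_i(h_u)+\mathbb{E}_i(h_v)}{2}-\mathbb{E}_i(h_{\{u,v\}})=\frac12\Big(\mathbb{P}_i(h_v<h_u)\,\mathbb{E}_v(h_u)+\mathbb{P}_i(h_u<h_v)\,\mathbb{E}_u(h_v)\Big).
\]
Substituting this into the previous display and exchanging the order of summation, so that $\sum_i\pi(i)\mathbb{P}_i(\cdot)$ appears as a factor, gives the stated formula, the two terms above accounting for the two orientations of each edge.

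For the transitive case I would feed two standard facts about vertex-transitive graphs into the formula: the stationary mean hitting time $\sum_i\pi(i)\mathbb{E}_i(h_w)$ does not depend on $w$, and hitting times are symmetric, $\mathbb{E}_a(h_b)=\mathbb{E}_b(h_a)$ for all $a,b$ (classical; see e.g.\ \cite{aldous-fill-2014}, and provable from $\pi(a)Z_{ab}=\pi(b)Z_{ba}$ together with $\mathbb{E}_i(h_j)=(Z_{jj}-Z_{ij})/\pi(j)$ when $\pi$ is uniform). Writing $\mathbb{P}_\pi(\cdot)=\sum_i\pi(i)\mathbb{P}_i(\cdot)$, the symmetry $\mathbb{E}_v(h_u)=\mathbb{E}_u(h_v)$ makes the edgewise term collapse: averaging the last display against $\pi$ and using $\mathbb{P}_\pi(h_v<h_u)+\mathbb{P}_\pi(h_u<h_v)=1$ gives $\frac12\big(\mathbb{P}_\pi(h_v<h_u)\mathbb{E}_v(h_u)+\mathbb{P}_\pi(h_u<h_v)\mathbb{E}_u(h_v)\big)=\mathbb{E}_v(h_u)/2$, independently of $i$ and — by edge-transitivity — of the edge. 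Summing over the $|E(G)|$ edges and dividing by $|E(G)|$ then yields $\omega(G)-\phi(G)=\mathbb{E}_v(h_u)/2$.

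I expect the only delicate point to be the bookkeeping around the decomposition identity — the strong Markov step, the absence of ties, and the boundary cases $i\in\{u,v\}$ — all of which are routine once set up carefully; the single non-elementary input is the symmetry of hitting times on vertex-transitive graphs, which one should either cite or derive from the reversibility-symmetry of the fundamental matrix.
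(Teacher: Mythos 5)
Your proof is correct and follows essentially the same route as the paper: both rest on the strong-Markov decomposition $\mathbb{E}_i(h_u)=\mathbb{E}_i(h_{\{u,v\}})+\mathbb{P}_i(h_v<h_u)\,\mathbb{E}_v(h_u)$ averaged against $\pi(i)\,\nu(\{u,v\})$, and your explicit symmetrization over the two orientations of each edge in fact makes the edge-averaging step $\sum_{\{u,v\}}\nu(\{u,v\})\,\tfrac{1}{2}\bigl(\mathbb{E}_i(h_u)+\mathbb{E}_i(h_v)\bigr)=\sum_{u}\pi(u)\,\mathbb{E}_i(h_u)$ cleaner than the paper's slightly informal version. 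The only (cosmetic) divergence is in the transitive case, where the paper directly invokes $\sum_i\pi(i)\mathbb{P}_i(h_v<h_u)=1/2$ on vertex-transitive graphs while you reach the same collapse via the hitting-time symmetry $\mathbb{E}_v(h_u)=\mathbb{E}_u(h_v)$; these two standard facts are interchangeable here.
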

\begin{proof}
The proof follows  from noticing that for every $i \in V(G)$ and every $\{u,v\} \in E(G)$ it holds that
\[
\mathbb{E}_i(h_{\{u,v\}})= \mathbb{E}_i(h_{u})- \mathbb{E}_v(h_{u}) \mathbb{P}_i(h_{v}<h_{u})\;.
\]  
Multiplying both sides by $\pi(i)$ (stationary distribution of a symmetric random walk on $G$) and $\nu(\{u,v\})$ (uniform distribution on $E(G)$) and summing over all $i \in V(G)$ and $\{u,v\}\in E(G)$ we obtain that
\[
\phi(G)= \omega(G) - \sum_{\{u,v\}\in E(G)}\frac{1}{|E(G)|}\mathbb{E}_v(h_{u}) \sum_{i\in V(G)}\pi(i)\mathbb{P}_i(h_{v}<h_{u})\;,
\]
where we used that \[\sum_{\{u,v\}\in E(G)} \nu(\{u,v\}) \mathbb{E}_i(h_{u})= \sum_{u\in V(G)}\sum_{v: \{u,v\}\in E(G)} \frac{1}{2}\nu(\{u,v\}) \mathbb{E}_i(h_{u})= \sum_{u\in V(G)} \mathbb{E}_i(h_{u}) \pi(u)\;,
\]
since $\nu(\{u,v\})$ is uniform over the edges of the graph. 
If $G$ is vertex-transitive we have that $\sum_{i\in V(G)}\pi(i)\mathbb{P}_i(h_{v}<h_{u})=1/2$  (see,  \cite{aldous_1989}) and also that  $\mathbb{E}_v(h_{u})$ does not depend on the specific edge (edge-transitive); thus $\phi(G)= \omega(G) - \frac{\mathbb{E}_v(h_{u})}{2}$.
\end{proof}

Lemma~\ref{lem:speed-up} allows to quantify the speed-up of \textbf{Edge-Wilson}$\bm{(G)}$, as compared to \textbf{Wilson}$\bm{(G, \varnothing)}$,  in constructing the first branch. In some specific cases  this gain can be explicitly computed. For example: 
\begin{itemize}
\item If $G=K_n$, i.e. a  complete graph on $n$ vertices, it is well-known that 
$ \omega(K_n)=n$ and that for $\{u,v\}\in E(K_n)$, $ \mathbb{E}_v(h_u)=\omega(K_n)$. Thus, from Lemma~\ref{lem:speed-up} we obtain $\frac{\phi(K_n)}{\omega(K_n)}=\frac{1}{2}$.

\item If $G=K_{\frac{n}{2}\frac{n}{2}}$, i.e., a  complete balanced bipartite graph on $n$ (even) vertices, for $\{u,v\}\in E(K_{\frac{n}{2}\frac{n}{2}})$ we have that $\mathbb{E}_v(h_u)=n-1$ and  $\omega(K_{\frac{n}{2}\frac{n}{2}})=n-\frac{1}{2}$.  Thus, from Lemma~\ref{lem:speed-up} we obtain $\frac{\phi(K_{\frac{n}{2}\frac{n}{2}})}{\omega(K_{\frac{n}{2}\frac{n}{2}})}=\frac{n}{2n-1}$.  

\item If $G=Q_d$, i.e. a  $d$-dimensional hypercube, it is well-known that 
$\lim_{d\rightarrow + \infty} \omega(Q_d)/2^d=1$ and that for $\{u,v\}\in E(Q_d)$, $\lim_{d\rightarrow + \infty} \mathbb{E}_v(h_u)/\omega(Q_d)=1$ (e.g., see \cite{levin2017markov}). Thus, from Lemma~\ref{lem:speed-up} we obtain $\displaystyle\lim_{d\rightarrow +\infty}\frac{\phi(Q_d)}{\omega(Q_d)}=\frac{1}{2}$. 
\end{itemize}
Thus, on a complete graph, complete balanced bipartite graph, or an hypercube, the expected number of random walk steps (i.e., running time) taken by \textbf{Edge-Wilson}$\bm{(G)}$ to construct the first branch is asymptotically half the expected number of random walk steps taken by \textbf{Wilson}$\bm{(G, \varnothing)}$ to construct the first branch. 

It is well known that \textit{Wilson} spends half of its running time to construct the first branch of the tree and half to finish the tree~\cite{wilson1996generating}. Thus, if the first branch produced by \textbf{Edge-Wilson}$\bm{(G)}$  and by \textbf{Wilson}$\bm{(G, \varnothing)}$ are ``statistically equivalent'' \textit{Edge-Wilson} would require half the time of \textit{Wilson} to construct the first branch (on complete graphs or hypercubes) and the same time as \textit{Wilson} to finish the tree, i.e.,  will require 25\% less time overall. This leads to following conjecture: 

\begin{conjecture}
Let $G$ be a transitive graph where \textit{Wilson} takes, asymptotically in the size of $G$,   half of the time to construct the first branch when its initial condition is an edge instead of a single vertex. 
Then, the expected time taken by \textbf{Edge-Wilson}$\bm{(G)}$ is asymptotically $25\%$ less than the expected time of \textbf{Wilson}$\bm{(G, \varnothing)}$, i.e.,  Wilson's algorithm on the graph $G$.  
\end{conjecture}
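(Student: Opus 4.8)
The plan is to split the running time of each algorithm into the number of steps spent building the first branch and the number spent completing the tree afterwards, to control each piece, and to add them up — the hypothesis of the conjecture is tailored to handle the first branch, and the real work lies in comparing the two ``completion'' phases. For \textbf{Wilson}$(G,\varnothing)$, write the expected number of random-walk steps as $\mathbb{E}[\hat\sigma_1]+\mathbb{E}[R_W]$, where $\hat\sigma_1$ is the first-branch stopping time of Section~\ref{sec:wilson} and $R_W$ is the number of additional steps needed to finish, i.e.\ the running time of Wilson continued from the random tree $\tree^{W}_{\hat\sigma_1}$. By Lemma~\ref{lem:wilson time complexity} the left-hand side equals $2\omega(G)$. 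For the first term, $\hat\sigma_1$ is a walk from a uniform vertex $v\neq r$ run until it hits $r$, so $\mathbb{E}[\hat\sigma_1]=\tfrac{1}{n-1}\sum_{v\neq r}\mathbb{E}_v(h_r)$; since $G$ is transitive, $\pi$ is uniform and $\tfrac1n\sum_v\mathbb{E}_v(h_r)=\omega(G)$ (a standard consequence of transitivity, or of $\omega(G)=\sum_j\pi(j)\mathbb{E}_r(h_j)$ together with $\sum_v\mathbb{E}_v(h_r)=\sum_j\mathbb{E}_r(h_j)$), hence $\mathbb{E}[\hat\sigma_1]=\tfrac{n}{n-1}\omega(G)=\omega(G)+o(\omega(G))$, and therefore $\mathbb{E}[R_W]=\omega(G)+o(\omega(G))$.

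For \textbf{Edge-Wilson}$(G)$, sampling the uniform initial edge costs no random-walk steps, so its expected number of steps is $\mathbb{E}[\hat\sigma_1^{\,e}]+\mathbb{E}[R_e]$, where $\hat\sigma_1^{\,e}$ is the first-branch time after planting the edge and $R_e$ is the running time of Wilson continued from the resulting tree $\tree^{e}_{\hat\sigma_1}$ — an edge together with one loop-erased branch attached to one of its endpoints. By definition $\mathbb{E}[\hat\sigma_1^{\,e}]=\phi(G)$ in the notation of Section~\ref{sec:hypercubes}, and Lemma~\ref{lem:speed-up} gives $\omega(G)-\phi(G)=\tfrac12\mathbb{E}_v(h_u)$ for transitive $G$; the hypothesis of the conjecture is precisely that this equals $\tfrac12\omega(G)+o(\omega(G))$, so $\mathbb{E}[\hat\sigma_1^{\,e}]=\tfrac12\omega(G)+o(\omega(G))$. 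Thus the conjecture reduces to the single assertion
\[
\mathbb{E}[R_e]=\omega(G)+o(\omega(G)),
\]
i.e.\ that completing the tree from $\tree^{e}_{\hat\sigma_1}$ is asymptotically as expensive as completing it from $\tree^{W}_{\hat\sigma_1}$. Granting this, summing the two phases yields $\mathbb{E}\big[\textbf{Edge-Wilson}(G)\big]=\tfrac32\omega(G)+o(\omega(G))=\tfrac34\cdot 2\omega(G)+o(\omega(G))=\tfrac34\,\mathbb{E}\big[\textbf{Wilson}(G,\varnothing)\big]+o(\omega(G))$, which is the claimed asymptotic 25\% reduction.

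The heart of the argument — and the main obstacle — is the displayed assertion. Both $R_W$ and $R_e$ are running times of Wilson's algorithm started from an initial tree, and by the branch decomposition each is a sum over branches $i\geq1$ of the loop-erased-walk construction time of the $i$-th branch, a functional of the growing tree. One would like to show that $\mathbb{E}\big[\textbf{Wilson}(G,\tree_o)\big]$ is insensitive, up to $o(\omega(G))$, to the precise shape of a ``small'' initial tree $\tree_o$: here $\tree^{W}_{\hat\sigma_1}$ and $\tree^{e}_{\hat\sigma_1}$ differ only in that the branch hangs off a single vertex rather than off an edge, a difference that should be negligible once the tree has grown. On complete graphs this can be pushed through with the tools of Sections~\ref{sec:aldous}--\ref{sec:wilson}: the branch-size dynamics depend on the current tree only through its number of vertices (Lemma~\ref{lem:branch_wilson} and the urn model), both first branches leave a tree of size $\Theta(\sqrt{n})$ with high probability, and from that point the two runs can be coupled so that their remaining expected step counts agree up to $o(n)=o(\omega(K_n))$. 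For a general transitive graph there is no comparable explicit handle on the branch process, so one would need either a transient-equivalence theorem for transitive graphs analogous to Theorem~\ref{thm:equivalence}, or a robust coupling of the two loop-erased-walk processes after the first branch that controls Wilson's running time as a function of the initial tree size; providing such an argument is exactly what is missing, and is why the statement is posed as a conjecture.
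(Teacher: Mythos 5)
This statement is posed in the paper as a conjecture, not a theorem, and your proposal follows essentially the same route as the paper's own informal justification: Lemma~\ref{lem:wilson time complexity} gives $2\omega(G)$ for \textbf{Wilson}$(G,\varnothing)$, of which asymptotically $\omega(G)$ is spent on the first branch and $\omega(G)$ on completion; Lemma~\ref{lem:speed-up} together with the hypothesis gives $\phi(G)=\tfrac12\omega(G)+o(\omega(G))$ for the first branch of \textbf{Edge-Wilson}; and the $25\%$ figure follows provided the completion phase costs $\omega(G)+o(\omega(G))$ in both cases. Your accounting of the first-branch terms is correct, including the harmless $\tfrac{n}{n-1}$ correction from starting the walk at a uniform vertex distinct from $r$. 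You have also correctly isolated the one genuinely open step, namely that Wilson's expected completion time is insensitive, up to $o(\omega(G))$, to whether the first branch hangs off a random vertex or off a random edge; this is exactly the ``statistically equivalent first branches'' assumption that the paper states informally and supports only by simulation on hypercubes (Figure~\ref{fig:hc_simulations}). Neither your argument nor the paper closes that step, which is why the statement remains a conjecture; your reduction is an accurate and slightly more precise rendering of the paper's heuristic, and your diagnosis that the missing ingredient is a coupling or transient-equivalence result controlling Wilson's running time as a function of a small initial tree on general transitive graphs matches the paper's own assessment.
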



To corroborate the conjecture, 
%
Figure~\ref{fig:hc_simulations}(c) shows the average number of random walk steps obtained from simulating the \textit{Edge-Wilson} and \textit{Wilson} on hypercubes with different dimensions and $10^4$ independent runs. 
%
%
\begin{figure}[H]
    \centering
    \captionsetup[subfigure]{justification=centering}
    \begin{subfigure}[t]{0.32\textwidth}
        \centering
        \includegraphics[width=\textwidth]{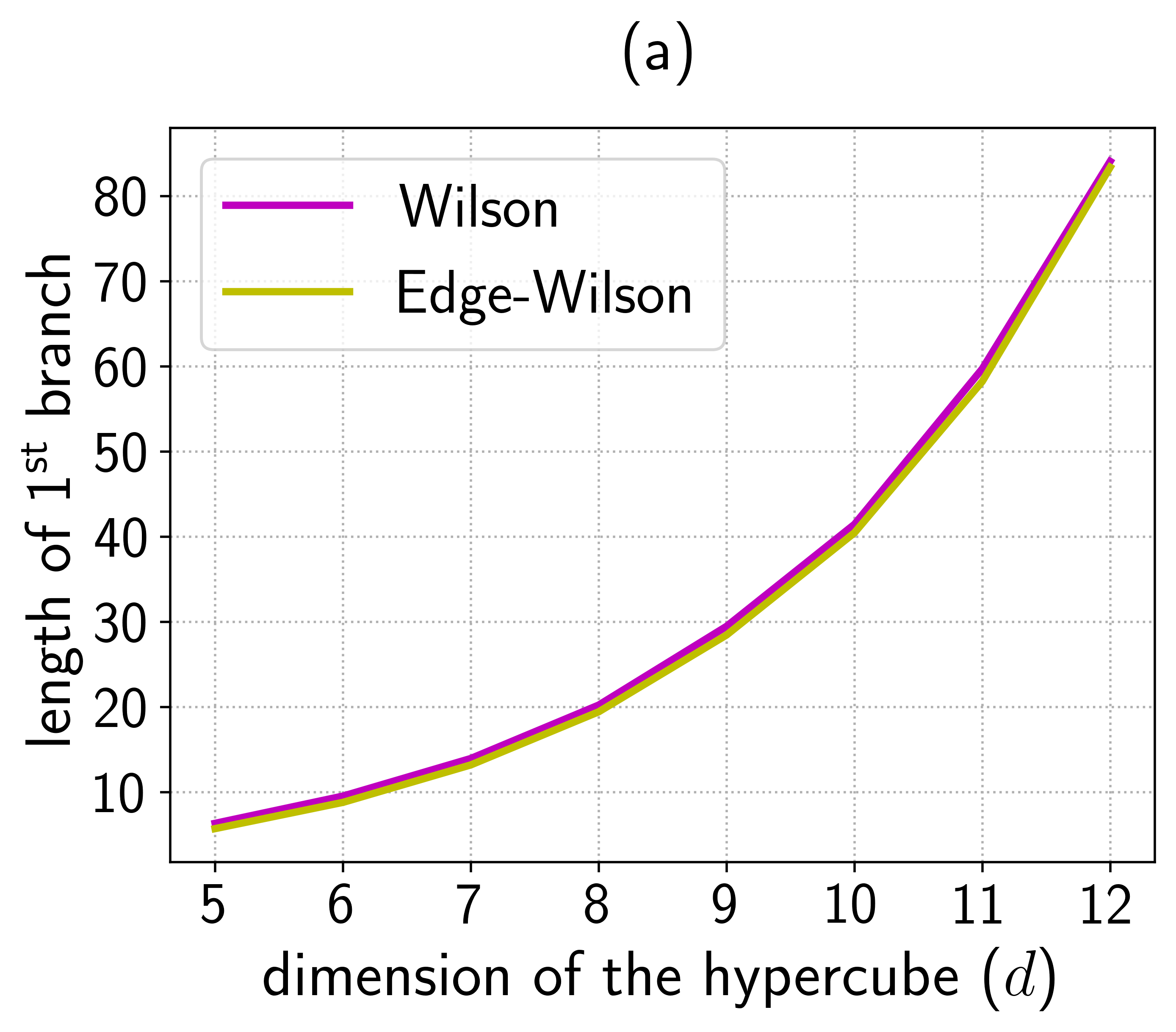}
    \end{subfigure}
    \begin{subfigure}[t]{0.32\textwidth}
        \centering
        \includegraphics[width=\textwidth]{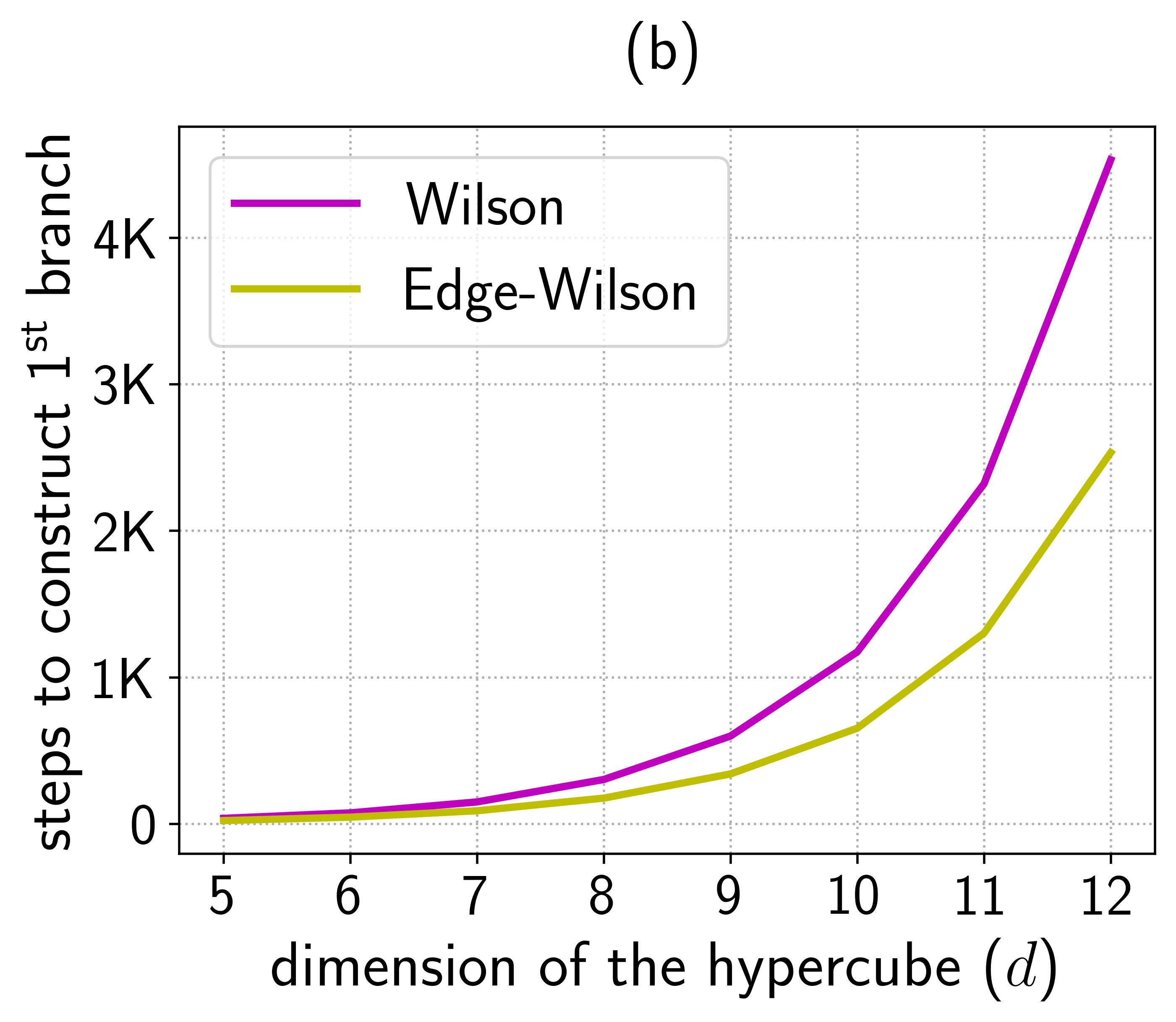}
    \end{subfigure}
    \begin{subfigure}[t]{0.32\textwidth}
        \centering
        \includegraphics[width=\textwidth]{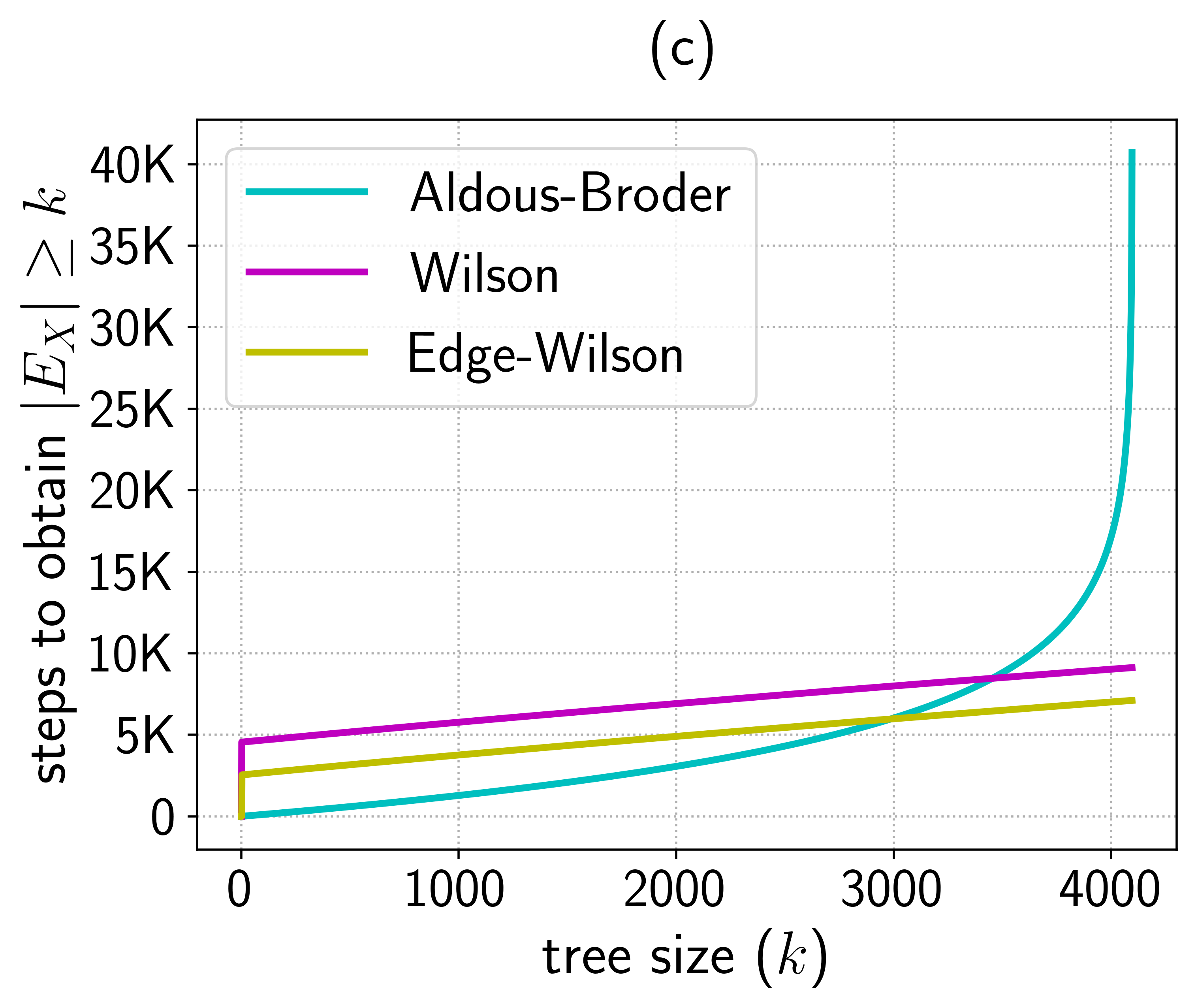}
    \end{subfigure}
    \caption{Simulation results on the hypercube: (a) average length of the first branch constructed by \textit{Edge-Wilson} and \textit{Wilson} as a function of hypercube dimension; (b) average number of steps taken by the two algorithms to construct the first branch as a function of hypercube dimension; (c) average number of steps required by each algorithm to include at least $k$ edges in the tree when running on a hypercube with dimension $d=12$. Every data point represents the average over $10^4$ independent runs.}
    \label{fig:hc_simulations}
\end{figure}
In particular for $d=12$, \textit{Edge-Wilson} required $7113.8$ steps (on average) to generate the spanning tree, whereas \textit{Wilson} required $9210.9$ steps (on average), leading to a factor (Edge-Wilson steps/Wilson steps) of approximately $0.77$. Figure~\ref{fig:hc_simulations}(b) shows the average number of steps to construct the first branch for each case. As the dimension $d$ increases, this ratio will converge to 1/2. 
Last, Figure~\ref{fig:hc_simulations}(a) provides some evidence that the first branch of both cases are statistically identical: the average length of the first branch produced by \textit{Wilson} and \textit{Edge-Wilson} are asymptotically identical, a necessary condition for them to be equivalent.

\bibliographystyle{amsplain}
\bibliography{sample.bib}

\appendix
\clearpage
\section{Random trees biased by a sub-tree structure}
\label{sec:linearly_biased}

Consider the problem of generating a random tree that has a sub-tree that is isomorphic to a given smaller tree. Can we do so in expected time that is linear on the number of vertices? For a particular and meaningful distribution over the random trees, the answer is yes! As will be discussed, this result is a direct consequence of Lemma~\ref{lem:second_step}, used in the second phase of the two-stage framework.

The \textit{Seeded-Tree} algorithm receives as input a positive integer $n$ and a tree $\subtree$ with $n'\leq n$ vertices and returns a random tree $\tree^{*}$ with $n$ vertices that has at least one sub-tree isomorphic to $\subtree$. 

\vspace{5pt}\hspace{-.8cm}\textbf{Seeded-Tree}$\bm{(n,\subtree)}$\textbf{:}
\begin{itemize}
    \item[0)] Let $\G = (\V,\E)$ be a complete graph with $n$ vertices;
    \item $\tree \leftarrow $ \textbf{Wilson($\G,\subtree$)};
    \item $\tree^{*} \leftarrow $ the random tree obtained by uniformly shuffling the labels of $\tree$;
    \item Return $\tree^{*}$.
\end{itemize}


%

\textit{Seeded-Tree} clearly generates a random tree with at least one sub-tree that is isomorphic to $\subtree$, and while the distribution over them is not uniform, the following theorem characterizes this distribution and its running time.

\begin{theorem}
    \label{thm:seeded-tree}
    $\textbf{Seeded-tree($n,\subtree$)}$ generates a random tree $\spatree$ with $n$ vertices
    with probability proportional to the number of sub-trees of $\spatree$ that are isomorphic to $\subtree$ in expected time $O(n)$.
\end{theorem}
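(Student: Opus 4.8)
The plan is to split the statement into its two independent halves — the output distribution and the $O(n)$ expected running time — and prove them separately.

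\medskip
\noindent\textbf{Distribution.} The starting points are: (i) by Lemma~\ref{lem:second_step}, the tree $\tree$ returned by \textbf{Wilson($\G,\subtree$)} on $\G=K_n$ is uniform on $\mathcal A:=\{\tau\in\T_{\G}:\tau\supseteq\subtree\}$, where $\subtree$ is regarded as a fixed \emph{labelled} sub-tree on a vertex set $L\subseteq[n]$, $|L|=n'$; write $a:=|\mathcal A|$; and (ii) $\tree^{*}$ is obtained from $\tree$ by an independent uniformly random relabelling $\psi\cdot\tree$, $\psi\in S_n$. Hence, for a fixed labelled tree $\spatree$ on $[n]$, $\PP(\tree^{*}=\spatree)=\tfrac{1}{a\,n!}\sum_{\tau\in\mathcal A}|\mathrm{Iso}(\tau,\spatree)|=\tfrac{|\mathrm{Aut}(\spatree)|}{a\,n!}\,M(\spatree)$, where $M(\spatree)$ is the number of labelled trees on $[n]$ that are isomorphic to $\spatree$ and contain $\subtree$. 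First I would compute $M(\spatree)$ by double-counting $\sum_{\psi\in S_n}\mathbf 1[\subtree\subseteq\psi\cdot\spatree]$: grouping the $\psi$'s by the labelled copy $\psi\cdot\spatree$ gives $|\mathrm{Aut}(\spatree)|\,M(\spatree)$, whereas rewriting $\subtree\subseteq\psi\cdot\spatree$ as ``$\psi^{-1}|_L$ is a subgraph-embedding of $\subtree$ into $\spatree$'' gives $N(\spatree)\,|\mathrm{Aut}(\subtree)|\,(n-n')!$, with $N(\spatree)$ the number of sub-trees of $\spatree$ isomorphic to $\subtree$ — the factor $|\mathrm{Aut}(\subtree)|$ accounting for embeddings with the same image, and $(n-n')!$ for the free extension of $\psi^{-1}$ off $L$. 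Equating, $M(\spatree)=\tfrac{(n-n')!\,|\mathrm{Aut}(\subtree)|}{|\mathrm{Aut}(\spatree)|}\,N(\spatree)$, so $\PP(\tree^{*}=\spatree)=\tfrac{(n-n')!\,|\mathrm{Aut}(\subtree)|}{a\,n!}\,N(\spatree)$, proportional to $N(\spatree)$ as claimed.

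\medskip
\noindent\textbf{Running time.} The relabelling costs $O(n)$, so the only real cost is the \textbf{Wilson($\G,\subtree$)} call. Here I would invoke the vertex-collapsing reduction from the proof of Lemma~\ref{lem:second_step}: collapsing $\subtree$ to a single vertex $r$ turns $K_n$ into a weighted graph $\G'_{*}$ on $n-n'+1$ vertices. Since every branch walk of \textbf{Wilson($\G,\subtree$)} stops at the first hit of the current tree, which always contains $V(\subtree)$, a step-by-step coupling (matching the branch-start vertices and each walk step) shows that \textbf{Wilson($\G,\subtree$)} and weighted Wilson on $\G'_{*}$ rooted at $r$ take exactly the same number of random-walk steps. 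By Wilson's running-time bound (Lemma~\ref{lem:wilson time complexity}, in its weighted form) the latter has expectation $2\hat\omega(\G'_{*})$, with $\hat\omega$ the weighted mean hitting time; and a one-line estimate gives $\hat\omega(\G'_{*})\le n-1$, since in $\G'_{*}$ every vertex reaches any fixed other vertex with probability at least $1/(n-1)$ at each step, so each hitting time is stochastically dominated by a geometric variable of mean $n-1$. Hence the expected number of steps is $O(n)$.

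\medskip
\noindent\textbf{Main obstacles.} On the distributional side the difficulty is purely careful bookkeeping: keeping the labelled-versus-unlabelled distinction straight and tracking the automorphism factors $|\mathrm{Aut}(\subtree)|$ and $|\mathrm{Aut}(\spatree)|$ correctly through the double count, where it is easy to drop or duplicate a factor. On the timing side the subtlety is that one genuinely needs Wilson's \emph{sharp} $2\omega$ bound via the collapsed graph: a self-contained branch-by-branch estimate, bounding each branch walk by a geometric variable, only yields $O(n\log n)$, so the reduction to $\G'_{*}$ is essential to reach $O(n)$.
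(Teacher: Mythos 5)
Your proposal is correct and follows essentially the same route as the paper: Lemma~\ref{lem:second_step} plus the label-shuffle symmetry of the complete graph for the distribution, and Wilson's $2\omega$ running-time bound for the $O(n)$ claim. Your write-up is more explicit in the two places where the paper is terse — you track the automorphism factors $|\mathrm{Aut}(\subtree)|$ and $|\mathrm{Aut}(\spatree)|$ through a permutation double count, whereas the paper sums directly over the labelled isomorphic copies of $\subtree$ inside $\spatree$ and uses that $|\T_{\G}(\subtree^{*})|$ depends only on the isomorphism class; and you justify the $O(n)$ bound for \textbf{Wilson($\G,\subtree$)} via the collapsed weighted graph, whereas the paper simply cites the unconditioned complete-graph running time.
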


\begin{proof}
Consider $\tree$, the random variable (random tree) denoting the output of \textbf{Wilson($G, \subtree$)}, where $\G$ is a complete graph on $n$ vertices and $\subtree$ is any sub-tree of $\G$ (not necessarily spanning). Now consider $\tree^*$, the random tree obtained by uniformly shuffling the labels of $\tree$. Clearly, since $G$ is complete, $\tree^*$ is a spanning tree in $\T_{G}$.

Consider the probability $\PP_{\subtree}\left(\tree^{*}=\spatree\right)$ that $\tree^*$ is a specific spanning tree $\spatree \in \T_{G}$. Since we shuffle the labels in the end, each sub-tree $\subtree^{*}$ of $\spatree$ that is isomorphic to $\subtree$ corresponds to one such event, then:
\begin{align*}
\PP_{\subtree}(T^{*}=\spatree) = \sum_{\substack{\subtree^{*}\in\Ssub_{\G}:\\\subtree^{*}\subseteq\;\spatree \;\wedge\; \subtree^{*}\simeq\; \subtree}}\PP\big(\text{\textbf{Wilson($G, \subtree^{*}$)}}=\spatree\big)
\end{align*}
where $\subtree^{*}\simeq\subtree$ indicates that there exists an isomorphism between $\subtree$ and $\subtree^{*}$, and $\{\text{\textbf{Wilson($G, \subtree^{*}$)}}$ $=\spatree\}$ represents the event that Wilson's algorithm with initial condition $\subtree^{*}$ returns $\spatree$.

From Lemma \ref{lem:second_step}, we know that $\PP\left(\text{\textbf{Wilson($G, \subtree^{*}$)}}=\spatree\right) = 1/|\T_{G}(\subtree^{*})|$. Moreover, the fact that $\G$ is complete tells us that $|\T_{G}(\subtree')| = |\T_{G}(\subtree'')|$ for all $\subtree'\simeq\subtree''$. Then, the probability of generating a specific tree $\spatree$ is proportional to the cardinality of the sum, which is exactly the number of sub-trees of $\spatree$ that are isomorphic to $\subtree$. Finally, note that the procedure consists of running \emph{Wilson} on a complete graph, which takes $O(n)$ expected time (as discussed in Section \ref{sec:hybrid}) and performing a random shuffle of the labels, which has $O(n)$ running time~\cite{knuth2014art2}. Thus, the expected running time of \textit{Seeded-Tree} is $O(n)$.
\end{proof}

\textit{Seeded-Tree} can be used to efficiently generate random trees with probability biased by a given sub-tree. For example, let $\subtree$ be a star graph with $k$ nodes. Running $\textbf{Seeded-tree($n,\subtree$)}$ generates spanning trees of the complete graph with probability proportional to the number of vertices with degree $k$. If $\subtree$ is a path of length $l$, $\textbf{Seeded-tree($n,\subtree$)}$ generates trees with probability proportional to the number of paths of length $l$. The generation of random trees constrained by a given structure finds application in a variety of areas, from decision making algorithms to real-life network optimization problems~\cite{li2005sampling,deo1997computation,liu2005maximizing}.











\end{document}